% SIAM Article Template
\documentclass[onefignum,onetabnum]{siamart190516}

% Information that is shared between the article and the supplement
% (title and author information, macros, packages, etc.) goes into
% ex_shared.tex. If there is no supplement, this file can be included
% directly.

% SIAM Shared Information Template
% This is information that is shared between the main document and any
% supplement. If no supplement is required, then this information can
% be included directly in the main document.

% Packages and macros go here
\usepackage{lipsum}
\usepackage{amsfonts}
\usepackage{graphicx}
\usepackage{epstopdf}
\usepackage{algorithmic}
\ifpdf
\DeclareGraphicsExtensions{.eps,.pdf,.png,.jpg}
\else
\DeclareGraphicsExtensions{.eps}
\fi

% Add a serial/Oxford comma by default.

% Used for creating new theorem and remark environments
\newsiamremark{remark}{Remark}
\newsiamremark{hypothesis}{Hypothesis}
\crefname{hypothesis}{Hypothesis}{Hypotheses}
\newsiamthm{claim}{Claim}

% Sets running headers as well as PDF title and authors
\headers{Stochastic optimization over proximally smooth sets}{D. Davis, D. Drusvyatskiy, and  Z. Shi}

% Title. If the supplement option is on, then "Supplementary Material"
% is automatically inserted before the title.
\title{Stochastic optimization over \\ proximally smooth sets}

% Authors: full names plus addresses.
\author{Damek Davis\thanks{School of ORIE, Cornell University,
		Ithaca, NY 14850, USA
		(\email{dsd95@cornell.edu}, \url{people.orie.cornell.edu/dsd95/}).}
	\and Dmitriy Drusvyatskiy\thanks{Department of Mathematics, U. Washington,
		Seattle, WA 98195
		(\email{ddrusv@uw.edu}, \url{sites.google.com/uw.edu/ddrusv}). Research was supported by the NSF DMS 1651851 and CCF 1740551 awards.}
	\and Zhan Shi\thanks{Department of Mathematics, U. Washington,
		Seattle, WA 98195
		(\email{zhansh@uw.edu})}
}
\usepackage{amsopn}

%%% Local Variables: 
%%% mode:latex
%%% TeX-master: "ex_article"
%%% End: 

% Optional PDF information
\ifpdf
\hypersetup{
  pdftitle={Stochastic optimization over proximally smooth sets},
  pdfauthor={Damek Davis, Dmitriy Drusvyatskiy, and  Zhan Shi}
}
\fi

\usepackage{amssymb}
\usepackage{amsmath}
\usepackage{enumerate}
\usepackage{hyperref}
\usepackage{cite}
\usepackage{color}
\usepackage{array}
\usepackage{pgfplots}

\usepackage{tikz}
\usetikzlibrary{positioning}
\usetikzlibrary{shadows}
\tikzset{
	invisible/.style={opacity=0},
	alt/.code args={<#1>#2#3}{\alt<#1>{\pgfkeysalso{#2}}{\pgfkeysalso{#3}}},
	visible on/.style={alt=#1{}{invisible}},
}
\usetikzlibrary{shapes,arrows,positioning,calc}
\usetikzlibrary{fit}\usetikzlibrary{arrows,shapes,positioning}
\usetikzlibrary{decorations.markings}
\tikzstyle arrowstyle=[scale=1]
\tikzstyle directed=[postaction={decorate,decoration={markings,
		mark=at position .5 with {\arrow[arrowstyle]{stealth}}}}]
\tikzstyle reverse directed=[postaction={decorate,decoration={markings,
		mark=at position .5 with {\arrowreversed[arrowstyle]{stealth};}}}]

\usepackage{subcaption}

\newtheorem{assumption}{Assumption}

\numberwithin{equation}{section}
%%%%%%%%%% Math %%%%%%%%%%%%%%%

\newcommand{\inclu}[0] {\ar@{^{(}->}}

\newcommand{\E}{\mathbb{E}}

\newcommand{\osa}{\mu}
\newcommand{\accD}{\tau_1}
\newcommand{\accR}{\tau_2}

\newcommand{\dist}{{\rm dist}}
\newcommand{\R}{\mathbb{R}}

\newcommand{\cR}{\mathcal{R}}
\newcommand{\cM}{\mathcal{M}}
\newcommand{\EE}{\mathbb{E}}

\newcommand{\RR}{\mathbb{R}}
\newcommand{\BB}{\mathbb{B}}

\newcommand{\cX}{\mathcal{X}}

\newcommand{\cC}{\mathcal{C}}
\newcommand{\cP}{\mathcal{P}}

\newcommand{\proj}{\mathrm{proj}}

%%%%%%%%% Convex Analysis %%%%%%%%%

\newcommand{\dom}{{\rm dom}\,}
\newcommand{\argmin}{\operatornamewithlimits{argmin}}

%%%% Probability %%%%%%%%%%%%%%%%%

\usepackage{mathtools}
\DeclarePairedDelimiter{\dotp}{\langle}{\rangle}

\usepackage{graphicx}
\graphicspath{{./figures/}}

\begin{document}

\maketitle

% REQUIRED
\begin{abstract}
  We introduce a class of stochastic algorithms for minimizing weakly convex functions over proximally smooth sets. As their main building blocks, the algorithms use simplified models of the objective function and the constraint set, along with a retraction operation to restore feasibility. All the proposed methods come equipped with a  finite time efficiency guarantee in terms of a natural stationarity measure. We discuss consequences for nonsmooth optimization over smooth manifolds and over sets cut out by weakly-convex inequalities.
\end{abstract}

% REQUIRED
\begin{keywords}
stochastic, subgradient, proximal,  weakly convex, proximally smooth

\end{keywords}

% REQUIRED
\begin{AMS}
 65K05, 65K10, 90C15, 90C30
\end{AMS}

\section{Introduction}

Stochastic optimization methods
play 
a central role 
in 
statistical and machine learning. 
Departing from the classical convex setting,
nonconvexity and nonsmoothness
feature   
in contemporary applications, 
motivating
		new work
			on
			algorithmic 
				foundations
					and 
				complexity theory~\cite{doi:10.1137/17M1151031,davis2019stochastic,davis2019stochasticgeom,asi1,asi2,duchi2018stochastic}. 
A favorable problem class  
highlighted 
by this line of work
	consists of
\emph{weakly convex} function minimization over convex constraint sets.
This class 
of
functions
is 
broad, allowing for nonsmooth and nonconvex objectives, 
and in particular
includes
all compositions 
of 
Lipschitz convex functions 
with 
smooth nonlinear maps.
While this line of work allows for weakly convex objectives, 
what
is missing is
an analogous framework for nonconvex constraints sets. 
%one 
%for designing and analyzing algorithms 
%	that 
%		both  
%			enjoy similar complexity guarantees
%				and
%			support nonconvex constraint sets. 
This is an important issue: 
one 
is often interested in 
optimizing nonsmooth functions---even those that are convex---over nonconvex sets, 
for example, 
over 
embedded submanifolds 
or over sets cut out by
nonconvex functional constraints~\cite{absil2019collection,10.1145/3362077.3362085}. 
Nonconvex constraint sets,
	however,
	introduce
new complications into the implementation and analysis
	of
	stochastic optimization algorithms:
		they seemingly
			require 
		us 
			to globally solve
		a series of nonconvex constrained optimization problems
			and
		they
			appear 
		to prevent
		the use of the standard tools for understanding complexity of optimization algorithms. 
In this work,
we overcome these issues and develop numerical methods for minimizing weakly convex functions over the class of \emph{proximally smooth sets}  \cite{clarkelower}---a broad class that 
includes  
closed convex sets, sublevel sets of weakly convex functions,
and
compact $C^2$-submanifolds of $\RR^d.$

Setting the stage, 
consider 
the stochastic optimization problem
\begin{equation}\label{eqn:opt_prob}
\min_x~ \EE_{\xi \sim P} \left[f(x,\xi)\right]\quad \text{subject to }x\in \cX,
\end{equation}
where 
the data $\xi$ 
follows 
a fixed but unknown probability distribution $P$,
the set $\cX\subseteq\R^d$ 
is 
closed, 
and 
the loss functions $f(\cdot,\xi)$ 
are 
$\rho$-weakly convex, 
meaning 
that the assignment $x\mapsto f(x,\xi)+\frac{\rho}{2}\|x\|^2$ 
is 
convex. 
Our approach to this problem
will 
draw on
and
extend 
the \emph{stochastic model-based algorithm of}~\cite{davis2019stochastic}, 
which proceeds as follows.
At each iteration $t$, the algorithm
samples
$\xi_t \sim P$
and then 
minimizes
a quadratic perturbation 
of
a simplified \emph{model} $f_{x_t}(\cdot, \xi_t)$ 
of the loss
$f(\cdot, \xi_t)$ 
over 
the constraint set: 
\begin{equation}\label{eqn:basic_model_alg}
x_{t+1} = \argmin_{x \in \cX}~ \left\{f_{x_t}(x,\xi_t) + \frac{\beta_t}{2} \|x - x_t\|^2\right\}.
\end{equation}
Here $\beta_t>0$ 
is a user-specified
control sequence. 
The paper~\cite{davis2019stochastic} 
develops
rigorous efficiency guarantees
for 
this algorithm 
under 
the following assumptions:
(a) the set $\cX$ is convex
and 
(b) 
the models $f_{x_t}(\cdot, \xi)$ 
are 
weakly convex 
and 
lower-bound $f$ up to a quadratic error in expectation. 
Such models, and variants, 
have been investigated 
at great length 
in 
the  papers \cite{davis2019stochastic,davis2019stochasticgeom,asi1,asi2}
and 
we refer 
the reader 
to 
these works 
for 
detailed examples. 
With these assumptions,
the method~\eqref{eqn:basic_model_alg}  
subsumes a number of important algorithms, 
such as the
stochastic 
subgradient, 
prox-linear, 
and 
prox-point 
methods, along with their clipped variants introduced in \cite{asi2}. See Table~\ref{table:functionmodels} and Figure~\ref{fig:illustr_lower_model} for an illustration.
\begin{table}[t!]
	\centering
	\begin{tabular}{|l|l|l|}
		\hline
		Algorithm & Objective $f(x)$ & Model $f_x(y,z)$ \\
		\hline
		Proximal point & $\EE_{\xi} f(x,\xi)$ & $f(y,\xi)$ \\
		Subgradient  & $\EE_{\xi} f(x,\xi)$ & $f(x,\xi)+\langle G(x,\xi),y-x\rangle$  \\
		Clipped subgradient & $\EE_{\xi} f(x,\xi)$ & $\max\{f(x,\xi)+\langle G(x,\xi),y-x\rangle,0\}$   \\
		Prox-linear & $\EE_{z} h(c(x,\xi),\xi)$ & $h(c(x,\xi)+\nabla c(x,\xi)(y-x),\xi)$\\
		\hline
	\end{tabular}
	\caption{Four typical models; the loss function $f(\cdot,\xi)$ is assumed to be weakly convex, the stochastic subgradient $G$ satisfies $\EE_\xi[G(x, \xi)] \in \partial f(x)$, the outer loss $h(\cdot,\xi)$ is Lipschitz and convex and $c(\cdot,\xi)$ is $C^1$-smooth with Lipschitz Jacobian.}\label{table:functionmodels}
\end{table}

	\begin{figure}[t!]
	\begin{subfigure}{.25\textwidth}
		\begin{center}
			\begin{tikzpicture}[scale=0.45]
			\pgfplotsset{every tick label/.append style={font=\large}}
			\begin{axis}[%
			domain = -2:2,
			samples = 200,
			axis x line = center,
			axis y line = center,
			xtick={0.5},
			ytick=\empty
			%        ticks = none
			]		
			\addplot[black, ultra thick] {abs(x^2-1)} [yshift=3pt];
			{         \addplot[purple, very thick] {abs(x^2 - 1)} [yshift=6pt] ;      }
			%		       {            \addplot[purple, very thick] [domain=-1:2]{abs(0.75-(x-0.5))+1.2*(x-0.5)^2} [yshift=3pt] node[pos=.95,left] {\footnotesize{$f_x+(x-0.5)^2$}};    }         
			%           \only<3>       {            \addplot[violet,dashed, very thick] [domain=-1.5:1.7]{abs(0.75-(x-0.5))-1.2*(x-0.5)^2} [yshift=3pt] node[pos=.95,left] {\footnotesize{$f_x-(x-0.5)^2$}};    }        
			%      \draw[fill] (1,1) circle [radius=5];
			
			\addplot [only marks,mark=*] coordinates { (0.5,0.75) };
			%		{   \addplot [only marks, mark=*, mark size=1.5] coordinates { (11/12,0) };}
			\end{axis}		
			\end{tikzpicture}
		\end{center}
		%	\caption{$f(x)=|x^2-1|$}
	\end{subfigure}%
	\begin{subfigure}{.25\textwidth}
		\begin{center}
			\begin{tikzpicture}[scale=0.45]
			\pgfplotsset{every tick label/.append style={font=\large}}
			\begin{axis}[%
			domain = -2:2,
			samples = 200,
			axis x line = center,
			axis y line = center,
			xtick={0.5},
			ytick=\empty,
			ymin=-0.2
			%        ticks = none
			]		
			\addplot[black, ultra thick] {abs(x^2-1)} [yshift=3pt] ;
			{         \addplot[purple, very thick] {abs(.5^2 - 1) - (2*(.5))*(x - .5)} [yshift=0pt] ;      }
			%		       {            \addplot[purple, very thick] [domain=-1:2]{abs(0.75-(x-0.5))+1.2*(x-0.5)^2} [yshift=3pt] node[pos=.95,left] {\footnotesize{$f_x+(x-0.5)^2$}};    }         
			%           \only<3>       {            \addplot[violet,dashed, very thick] [domain=-1.5:1.7]{abs(0.75-(x-0.5))-1.2*(x-0.5)^2} [yshift=3pt] node[pos=.95,left] {\footnotesize{$f_x-(x-0.5)^2$}};    }        
			%      \draw[fill] (1,1) circle [radius=5];
			
			\addplot [only marks,mark=*] coordinates { (0.5,0.75) };
			%		{   \addplot [only marks, mark=*, mark size=1.5] coordinates { (11/12,0) };}
			\end{axis}		
			\end{tikzpicture}
		\end{center}
	\end{subfigure}%
	\begin{subfigure}{.25\textwidth}
		\begin{center}
			\begin{tikzpicture}[scale=0.45]
			\pgfplotsset{every tick label/.append style={font=\large}}
			\begin{axis}[%
			domain = -2:2,
			samples = 200,
			axis x line = center,
			axis y line = center,
			xtick={0.5},
			ytick=\empty
			%        ticks = none
			]		
			\addplot[black, ultra thick] {abs(x^2-1)} [yshift=3pt];
			{         \addplot[purple, very thick] {max(abs(.5^2 - 1) - (2*(.5))*(x - .5), 0)} [yshift=10pt] ;      }
			%		       {            \addplot[purple, very thick] [domain=-1:2]{abs(0.75-(x-0.5))+1.2*(x-0.5)^2} [yshift=3pt] node[pos=.95,left] {\footnotesize{$f_x+(x-0.5)^2$}};    }         
			%           \only<3>       {            \addplot[violet,dashed, very thick] [domain=-1.5:1.7]{abs(0.75-(x-0.5))-1.2*(x-0.5)^2} [yshift=3pt] node[pos=.95,left] {\footnotesize{$f_x-(x-0.5)^2$}};    }        
			%      \draw[fill] (1,1) circle [radius=5];
			
			\addplot [only marks,mark=*] coordinates { (0.5,0.75) };
			%		{   \addplot [only marks, mark=*, mark size=1.5] coordinates { (11/12,0) };}
			\end{axis}		
			\end{tikzpicture}
		\end{center}
		%	\caption{$f(x)=|x^2-1|$}
	\end{subfigure}%
	\begin{subfigure}{.25\textwidth}
		\begin{center}
			\begin{tikzpicture}[scale=0.45]
			\pgfplotsset{every tick label/.append style={font=\large}}
			\begin{axis}[%
			domain = -2:2,
			samples = 200,
			axis x line = center,
			axis y line = center,
			xtick={0.5},
			ytick=\empty
			%        ticks = none
			]		
			\addplot[black, ultra thick] {abs(x^2-1)} [yshift=3pt] ;
			{         \addplot[purple, very thick] {abs(0.75-(x-0.5))} [yshift=6pt] ;      }
			%		       {            \addplot[purple, very thick] [domain=-1:2]{abs(0.75-(x-0.5))+1.2*(x-0.5)^2} [yshift=3pt] node[pos=.95,left] {\footnotesize{$f_x+(x-0.5)^2$}};    }         
			%           \only<3>       {            \addplot[violet,dashed, very thick] [domain=-1.5:1.7]{abs(0.75-(x-0.5))-1.2*(x-0.5)^2} [yshift=3pt] node[pos=.95,left] {\footnotesize{$f_x-(x-0.5)^2$}};    }        
			%      \draw[fill] (1,1) circle [radius=5];
			
			\addplot [only marks,mark=*] coordinates { (0.5,0.75) };
			%		{   \addplot [only marks, mark=*, mark size=1.5] coordinates { (11/12,0) };}
			\end{axis}		
			\end{tikzpicture}
		\end{center}
		%	\caption{$f(x)=|x^2-1|$}
	\end{subfigure}%
	\caption{One-sided models for the function $f(x,\xi)=|x^2-1|$ at the point $x=0.5$, appearing in the same order as in Table~\ref{table:functionmodels}.}
	\label{fig:illustr_lower_model}
\end{figure}
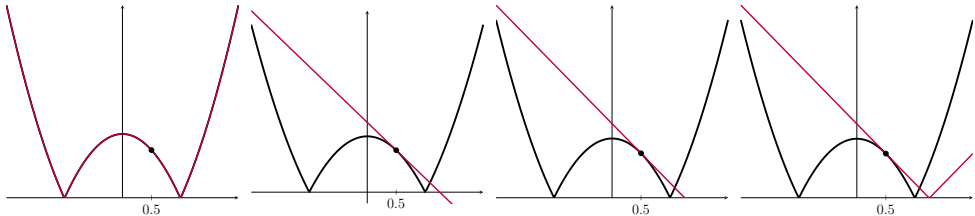

Under the assumptions (a) and (b) above 
and 
standard Lipschitz conditions, 
the paper~\cite{davis2019stochastic}
proves 
that a natural implicit smoothing of the problem---the \emph{Moreau envelope}---  
serves 
as an approximate Lyapunov function 
for 
the algorithm dynamics
and moreover 
its gradient 
tends 
to zero 
at 
a controlled rate.
As argued in~\cite{davis2019stochastic}, 
the size of the Moreau envelope's gradient 
is
a meaningful stationarity measure because it certifies the existence of a nearby point that nearly satisfies first-order necessary conditions for optimality.
Although these results
do not require the objective function
$f$ to be 
smooth 
or 
convex, 
they do require the constraint set $\cX$ to be convex. 
In the current work, 
we 
aim 
to relax the convexity assumptions on $\cX$, while maintaining similar convergence guarantees. 

\subsection{The algorithmic framework}

Moving beyond convexity of $\cX$ immediately yields 
two  challenges. 
The first arises from a conceptual obstruction in extending the proof of~\cite{davis2019stochastic}, 
while the second arises from the practical difficulty of implementing~\eqref{eqn:basic_model_alg} when $\cX$ is nonconvex.
Beginning with the former, 
let us recall 
a central tool for analyzing first-order methods: the three point inequality (see e.g. \cite{beck,nesterov2013gradient,teboulle2018simplified,chen1993convergence}). 
To that end, 
let $g$ 
be 
a $\rho$-weakly convex function 
and 
let $\cX$
be 
a closed convex set. 
Then for any $\bar \rho>\rho$, 
the perturbed function $y\mapsto g(y)+\frac{\bar \rho-\rho}{2}\|y-x\|^2$ 
is 
strongly convex with parameter $\bar \rho-\rho$. 
Therefore its minimizer $\hat x$ over $\cX$ 
satisfies 
the estimate:
\begin{align}\label{eq:3point}
\frac{\bar \rho-\rho}{2}\|y-\hat{x}\|^2\leq \left(g(y) +\frac{\bar \rho}{2}\|y-x\|^2\right)-\left(g(\hat x)+\frac{\bar \rho}{2}\|x-\hat{x}\|^2\right), \qquad \forall y \in \cX
\end{align}
Such three-point estimates 
underpin 
much of complexity analysis in convex optimization, 
and 
they 
are 
similarly crucial to the results of~\cite{davis2019stochastic}, 
where they are applied to the subproblem~\eqref{eqn:basic_model_alg}. The inequality \eqref{eq:3point} also plays an important role in the related works \cite{asi1,asi2} on the stability of stochastic proximal algorithms.

Without convexity of $\cX$, the three point inequality may fail.		
To overcome this difficulty,
we
will restrict 
ourselves to a favorable class 
of 
sets $\cX$, 
namely, 
those that 
are
\emph{$r$-proximally smooth}. Following~\cite{clarkelower}, we say that $\cX$  is $r$-proximally smooth if the nearest-point projection $\proj_{\cX}(\cdot)$ 
evaluates to 
a singleton on the tube $\{x:\dist(x,\cX)<r\}$.\footnote{Proximally smooth sets 
	have appeared 
	under a variety of names in the literature, 
	including \emph{sets with positive reach}~\cite{federer1959curvature} and \emph{uniformly prox-regular sets}~\cite{poliquin1996prox}. 
}  
The class of proximally smooth sets
is 
broad
and 
includes 
all convex sets, 
sublevel sets of weakly convex functions \cite{adly2016preservation},
and compact $C^2$-submanifolds of $\R^d$.
We 
will show that under the proximal smoothness assumption,
an estimate similar to~\eqref{eq:3point} holds 
for 
a sufficiently large value
of 
$\bar \rho$.	
This result should be expected, since weak convexity and proximal smoothness are closely related. 
For example, the epigraph of any weakly convex function is proximally smooth~\cite[Theorem 5.2]{clarkelower}.
In light of this estimate, 
a quick argument  
shows 
that the results of~\cite{davis2019stochastic}
extend 
to proximally smooth sets $\cX$. 
Although this result 
is 
already appealing, 
there 
still remains 
a central practical difficulty: 
when $\cX$ is nonconvex, 
it may be impossible to implement~\eqref{eqn:basic_model_alg}, since it requires solving a  nonconvex problem.

To develop a more easily implementable variant of~\eqref{eqn:basic_model_alg}, 
we 
draw on core techniques 
of  manifold optimization~\cite{absil2009optimization} and nonlinear programming~\cite{NW}. 
Namely, 
we 
replace~\eqref{eqn:basic_model_alg} 
with two simpler steps: 
the first step 
optimizes 
the model function 
over 
a simplified local approximation 
of 
$\cX$, 
while the second ``retracts" this iterate back to $\cX$. 
More formally, 
we
analyze
algorithms
built 
from three basic ingredients. 
The first 
is 
a family 
of 
models $f_x(\cdot,\xi)$
of 
the objective function as outlined previously.
The second 
is 
a family 
of 
local approximations $\cX_x$ 
of 
the constraint set,
indexed by basepoints $x\in \cX$.
The third   
is 
a retraction map $\cR_x\colon\cX_x\to\cX$ 
that 
restores 
feasibility
and
acts
as an approximate nearest point projection onto $\cX$. 
With these ingredients in hand, 
we 	
arrive 
at our main algorithm,
which simply iterates the steps
\begin{equation}\label{eqn:mba}
\begin{aligned}
&\textrm{Sample } \xi_t \sim P\\
& \textrm{Set } \tilde x_{t} = \argmin_{x \in \cX_{x_t}}~ \left\{f_{x_t}(x,\xi_t) + \frac{\beta_t}{2} \|x - x_t\|^2\right\}\\
& \textrm{Set }  x_{t+1} = \cR_{x_t}(\tilde x_t)
\end{aligned}.
\end{equation}
Thus in each iteration, 
the algorithm 
minimizes 
a quadratically regularized stochastic model 
of 
the objective function 
over 
a simplified model 
of 
the constraint set.
Then to restore feasibility,  
it 
``retracts" 
$\tilde x_t$ to $\cX$.

To prove efficiency estimates for~\eqref{eqn:mba}, 
we 
assume 
the building blocks 
$f_x(\cdot,\xi)$, 
$\cX_x$, 
and 
$\cR_x$
behave favorably. 
These assumptions, summarized in Table~\ref{tab:assumpblocks},  come in two flavors: regularity of the individual building blocks and control on their approximation quality. 
Beginning with the models, 
we 
assume properties (a) and (b) as above.
Turning to the constraint approximation,
we first
assume
the sets $\cX_x$
are 
$r$-proximally smooth 
for 
all $x \in \cX$. 
Next, 
we  
assume 
that for any $x \in \cX$, 
the distance function $\dist(y, \cX_x)$ 
minorizes 
$\dist(y, \cX)$ up to quadratic error, near the basepoint.
This condition 
mirrors 
the approximation requirements 
of 
the functional models $f_x(\cdot,\xi)$, 
and 
we 
will see 
it 
holds 
for important  examples. 
Finally 
we 
assume 
the retraction $\cR_x(\cdot)$ 
restores 
feasibility 
in 
a controlled way, 
as suggested 
in Table~\ref{tab:assumpblocks}.

\begin{table}[t!]
	\label{table:intro}
	\begin{center}
		\begin{tabular}{| c | c |c|}
			\hline
			& Regularity & Approximation ($\forall x,y\in\cX,w\in\cX_x$)\\\hline
			Functions $f_x(\cdot,\xi)$  & \parbox[t]{3cm}{$\rho$-weakly convex
				\\
				$L$-Lipshitz}	
			&\parbox[t]{5.7cm}{$\E_{\xi}f_x(x,\xi)=f(x)$\\	
				$\E_{\xi}f_x(y,\xi)\le f(y)+\frac{\osa}{2}\|y-x\|^2$}\\
			\hline
			Sets $\cX_x$ & $r$-proximally smooth 
			&$\dist(y, \cX_{x})  \leq \frac{\accD}{2} \| x- y\|^2$\\
			\hline
			Retraction $\cR_x$& - &$\|w - \cR_x(w) \| \leq \frac{\accR }{2} \|x-w\|^2 $\\
			\hline
		\end{tabular}
	\end{center}
	\caption{Assumptions on the algorithmic building blocks}\label{tab:assumpblocks}
\end{table}

\subsection{Two examples}		
To illustrate the algorithmic setup,
we 
analyze in detail 
two examples 
of 
constraint sets and their set approximations.  
\paragraph{Nonsmooth optimization over smooth manifolds} As the first example, consider a compact $C^{\infty}$ submanifold $\cX$ of $\RR^d$. 
We may choose $\cX_x=\cX$ and $\cR_x = I_d$
or 
we 
may declare $\cX_x$ to be the translated tangent space $x+T_{\cX}(x)$ and $\cR_x$ to be the projection onto $\cM$. See Figure~\ref{fig:sphere} for an illustration. 
In this setup, 
the tangent space approximation 
may yield 
considerably simpler subproblems for computing $\tilde x_t$.
The retraction,
however, 
may still be
somewhat costly. Nonetheless, this set-up already subsumes the stochastic Riemannian subgradient method. Convergence guarantees for the stochastic Riemannian subgradient method were recently obtained in \cite{li2019nonsmooth}. Our paper was developed concurrently and independently of this work. 

\begin{figure}[h!]
	\centering
	\includegraphics[scale=0.5]{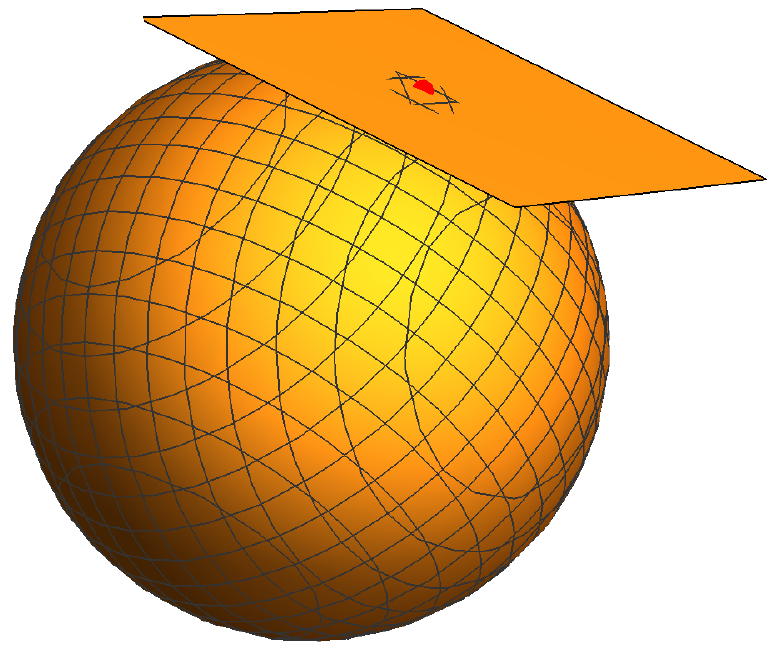}
	\caption{Unit Sphere and its tangent approximations.}\label{fig:sphere}
\end{figure}

\paragraph{Nonsmooth optimization with functional constraints} 
As the second example, 
we 
consider sublevel sets
$\cX=[g\leq 0]$ of closed weakly convex functions $g \colon \RR^d \rightarrow \RR \cup \{+\infty\}$, which satisfy a standard constraint qualification. For instance, sets cut out by smooth nonlinear inequalities $g_i\leq 0$ for $i=1,\ldots,m$ correspond to setting $g=\max_{i=1,\ldots, m} g_i$. 
For such sublevel sets, 
it 
may be
costly to compute a retraction, and therefore
instead we 
seek 
an inner approximation of $\cX$, 
guaranteeing that $\tilde x_t \in \cX$.
A natural class 
of 
such approximations
arises
whenever we 
have 
access to a family of convex two-sided models $g_x(\cdot)$ 
of 
$g$, meaning:
$$|g_x(y)-g(y)|\leq \frac{\gamma}{2}\|y-x\|^2\qquad \forall x,y\in \R^d,$$
where $\gamma>0$ is a constant independent of $x$. See Table~\ref{table:tab} for some notable examples of such two-sided models.
Indeed, then we
may  define
$$\cX_x:=\left\{y: g_x(y)+\frac{\gamma}{2}\|y-x\|^2\leq 0\right\}.$$ 
Since the inclusion $\cX_x \subseteq \cX$ holds for all $x \in \cX$, we may use the identity retraction $\cR_{x} = I_d$. For example, if $\cX$ is cut out by smooth inequalities $$\cX=\{x: g_i(x)\leq 0~~\forall i=0,\ldots,m\},$$ then under reasonable regularity conditions, we may set $$\cX_x=\left\{y:g_i(x)+\langle \nabla g_i(x),y-x\rangle+\frac{\gamma}{2}\|y-x\|^2\leq 0  ~~\forall i=0,\ldots,m\right\},$$
where $\gamma$ is any upper bound on the Lipschitz constants of the gradients $\nabla g_i$. 
 See Figure~\ref{fig:inequality} for an illustration.
\begin{table}[t!]
	\centering
	\begin{tabular}{|c|c|}
		\hline
		Constraint $g(x)$ & Model $g_x(y)$ \\
		\hline
		$g(x)$ & $g(y)+\frac{\gamma}{2}\|y-x\|^2$ \\
		$\displaystyle\max_{i=1,\ldots,m}~g_i(y)$ & $\displaystyle\max_{i=1,\ldots,m}~\{g_i(x)+\langle \nabla g_i(x),y-x\rangle\}$  \\
		$h(c(x))$ & $h(c(x)+\nabla c(x)(y-x))$\\
		\hline
	\end{tabular}
	\caption{Typical  convex two-sided models; the function $g(\cdot)$ is assumed to be $\gamma$-weakly convex, $g_i$ are $C^1$-smooth with Lipschitz gradients, $h(\cdot)$ is Lipschitz and convex, and $c(\cdot)$ is $C^1$-smooth with Lipschitz Jacobian.}\label{table:tab}
\end{table}

\begin{figure}[t]
	\centering
	\begin{subfigure}{.49\textwidth}
		\centering
		\includegraphics[scale=0.35]{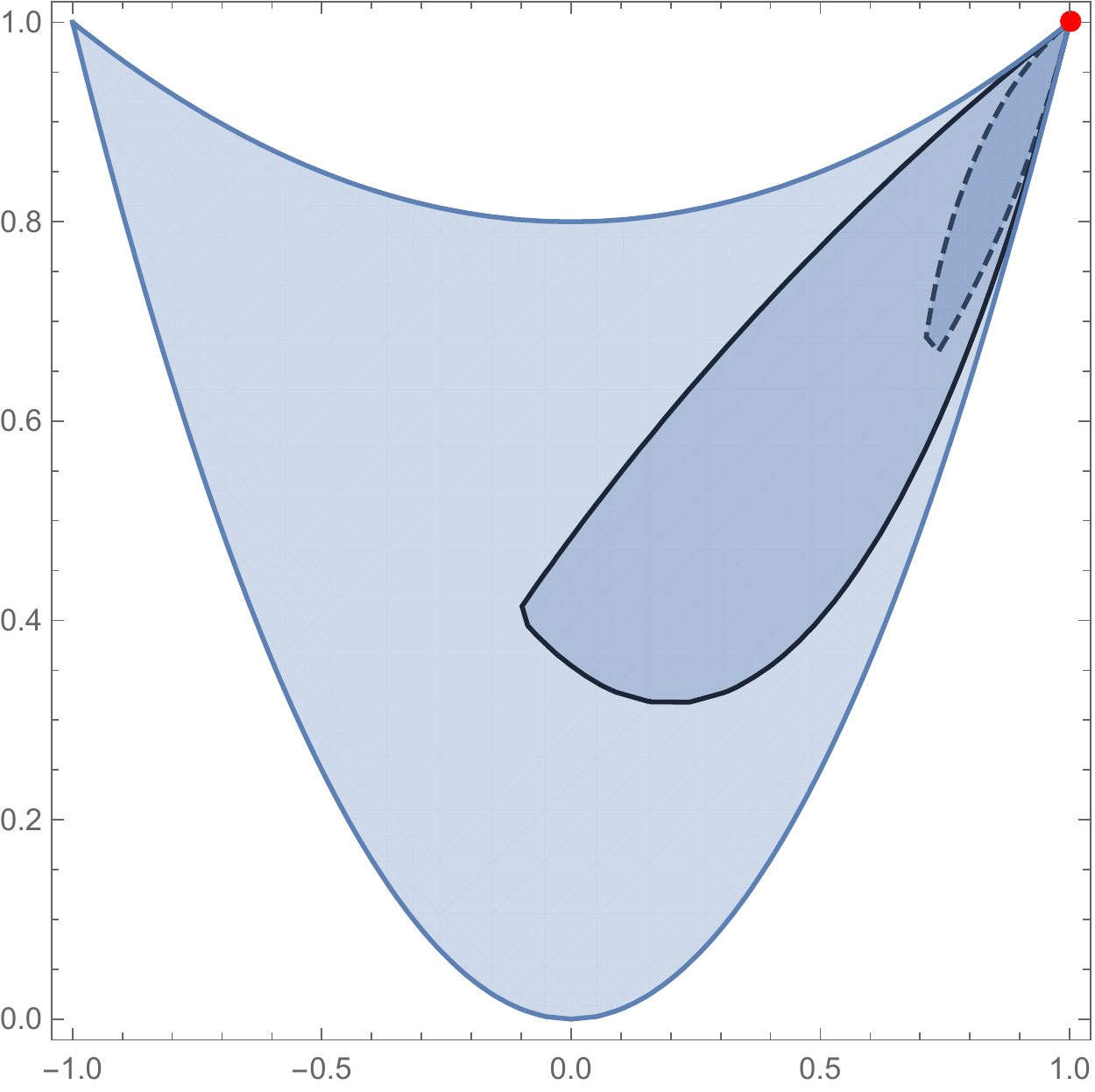}
	\end{subfigure}
	\begin{subfigure}{.49\textwidth}
		\centering
		\includegraphics[scale=0.35]{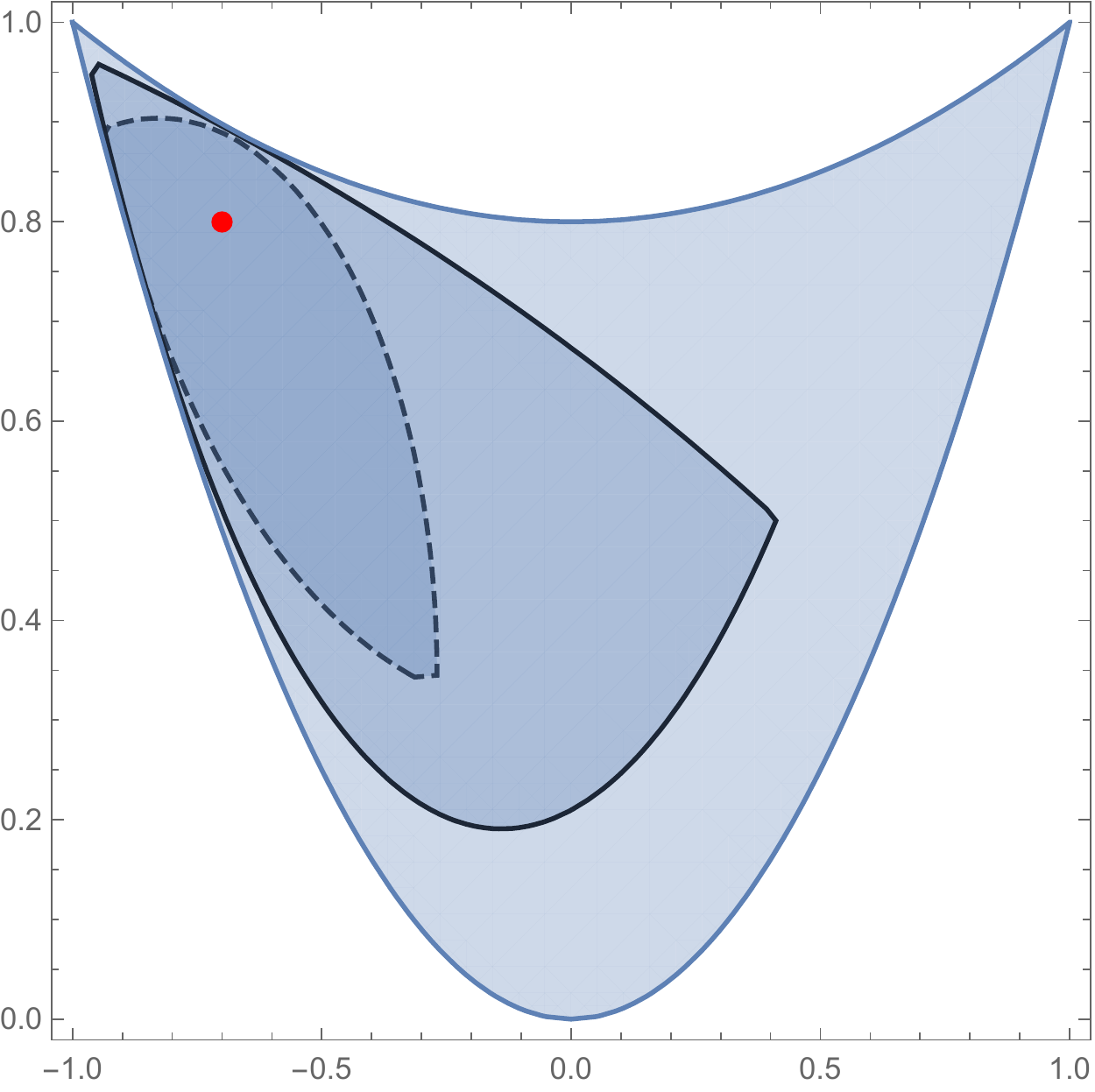}
	\end{subfigure}
	\caption{Both of the figures depict the feasible region $\cX=\{(x,y): g_1(x,y)\leq 0, g_2(x,y)\leq 0\}$, where we define the two quadratics $g_1(x,y)=x^2-y$ and $g_2(x,y)=y - \tfrac{1}{5} x^2-\tfrac{4}{5}$. The basepoints where the set approximations are formed, depicted in red, are $(x_0,y_0)\in \{(1,1),(-0.7,0.8)\}$. The region with the solid  boundary is $\{(x,y):g_i(x,y) + \tfrac{1}{4} \|(x,y)-(x_0,y_0)\|^2\leq 0 ~\forall i=1,2\}$. The region with the dashed boundary is $\{(x,y):g_i(x_0,y_0)+\langle \nabla g(x_0,y_0),(x,y)-(x_0,y_0)\rangle + 1.1\|(x,y)-(x_0,y_0)\|^2\leq 0~\forall i=1,2\}$.}\label{fig:inequality}
\end{figure}

\subsection{Convergence guarantees}
Similarly to the arguments in \cite{davis2019stochastic}, our analysis of Algorithm~\eqref{eqn:mba} will be based on the following three constructions: the {\em Moreau envelope} and the {\em proximal map}, defined respectively by
$$\cM_{\lambda}(x)= \min_{y\in \cX}~ \left\{f(y)+\frac{1}{2\lambda}\|y-x\|^2\right\},\quad\cP_{\lambda}(x):=\argmin_{y\in \cX}~ \left\{f(y)+\frac{1}{2\lambda}\|y-x\|^2\right\},$$
and the {\em stationarity measure}
$$\mathcal{C}_{\lambda}(x):=\lambda^{-1}\cdot\dist(x,\cP_{\lambda}(x)).$$
The quantity  $\mathcal{C}_{\lambda}(x)$ has an intuitive interpretation in terms of near-stationarity for the target problem \eqref{eqn:opt_prob}; see Section~\ref{sec:prob_stat_meas} for details. 
It is worthwhile to mention that $\cC_{\lambda}(x_t)$ can be 
interpreted as the norm of the gradient of the Moreau envelope $\cM_{\lambda}(\cdot)$ when $\lambda$ is sufficiently large, but for the sake of brevity, we will not explore this connection further.
The efficiency guarantees
of
this work
mirror 
those in~\cite{davis2019stochastic}
and 
the core of the argument still remains: 
the Moreau envelope $\cM_{\lambda}$
is 
an approximate Lyapunov function for~\eqref{eqn:mba}
and
the algorithm finds a point $x$ satisfying $\EE\left[\cC_{\lambda}(x)\right] \leq \varepsilon$ after at most $O(\varepsilon^{-4})$ iterations.

\subsection{Related Work} 
There are a number of recent closely related papers. 
\paragraph{Nonsmooth optimization over Riemannian manifolds}
The closely related paper~\cite{li2019nonsmooth} appeared on arXiv a few months prior to ours. The two papers were developed independently and are complementary. The authors of \cite{li2019nonsmooth}
prove a ``Riemannian subgradient inequality" for a weakly convex function over the Stiefel manifold, and remark on its possible extensions to compact manifolds. They use this inequality to analyze the stochastic Riemannian subgradient method, and obtain convergence guarantees that are similar to ours, when specialized to this setting. Other notable lines of work include  gradient sampling \cite{borckmans2014riemannian,grohs2016varepsilon,hosseini2018line,hosseini2017riemannian}, proximal gradient \cite{chen2019proximal}, and the proximal point method \cite{ferreira2002proximal,bacak2016second,de2016new} on Riemannian manifolds.

{\em Proximally Smooth sets.} Proximal smoothness was systematically studied in \cite{clarkelower} with the view towards optimization theory, though the core definition dates back to Federer \cite{federer1959curvature}. To the best of our knowledge, there are only a few papers that study first-order algorithms for minimizing functions over proximally smooth sets. Notable examples include \cite{balashov2019error,balashov2019gradient}, which analyze the efficiency of the projected gradient method on proximally smooth sets.

{\em Optimization with functional constraints.} There have recently appeared a number of papers that develop first-order methods for minimizing smooth (stochastic) objectives over sets cut out by smooth deterministic inequalities \cite{Cartis2014,wang2017penalty,facchinei2017ghost}. The convergence guarantees in these works are all stated in terms of a KKT-residual measure, and therefore are not directly comparable to the ones we obtain here.
The papers~\cite{ma2019proximally} find approximate KKT points when the constraining functions may be nonsmooth, while the papers \cite{boob2019proximal,lin2019inexact} additionally allow stochastic constraints. In particular, the work \cite{boob2019proximal}  explicitely utilizes Lagrange multipliers within the algorithms.

\section{Preliminaries}\label{sec:prelim}
In this section
we 
introduce
the basic notation of the paper, 
which 
mostly follows 
that in 
standard texts 
in 
convex 
and 
variational analysis, such as \cite{RW98,penot_book,Mord_1}.
Throughout the work, 
we 
consider 
the Euclidean space $\mathbb{R}^d$, 
which we 
equip 
with 
a fixed inner product $\langle\cdot,\cdot\rangle$ 
and 
its induced norm $\|\cdot\|=\sqrt{\dotp{\cdot,\cdot}}$.  
We 
let 
the symbol $\BB(x,r)$ 
denote 
the closed ball 
of 
radius $r>0$ 
around 
a point $x$.  
We 
define  
the 
distance 
and 
the nearest-point projection 
of 
a point $y\in\R^d$ 
onto 
a set $\cX\subset\R^d$ 
by
\[
\dist(y,\cX):=\inf_{x\in \cX}~\|x-y\|\qquad \textrm{and}\qquad\proj_{\cX}(y):=\argmin_{x\in \cX}~\|x-y\|,
\]
respectively. 
For any real number $r>0$, 
we
define 
the {\em $r$-tube} 
around 
$\cX$ 
to be the set
$
U_{\cX}(r):=\{y\in\R^d:\dist(y,\cX)<r\}.
$
For 
any function $g\colon\R^d\to\R\cup\{\infty\}$ 
and 
$r\in\R$, 
we 
define 
the sublevel set $[g\leq r]:=\{x: g(x)\leq r\}$. For ant set $Q\subset\R^d$, the indicator function, denoted by $\delta_Q(\cdot)$ is defined to be zero on $Q$ and $+\infty$ off it.

Consider a function $f\colon\R^d\to\R\cup\{\infty\}$ and a point $\bar x$, with $f(\bar x)$ finite. 
The {\em subdifferential} of $f$ at $\bar x$, denoted by $\partial f(\bar x)$, consists of all vectors $v\in\R^d$ satisfying
$$f(x)\geq f(\bar x)+\langle v,x-\bar x\rangle+o(\|x-\bar x\|)\qquad \textrm{as } x\to \bar x.$$ 
We will often use the following basic variational construction. The \textit{normal cone} to a set $\cX$ at a point $\bar x\in \cX$, denoted $N_{\cX}(\bar x)$, consists of all vectors $v\in\R^d$ satisfying 
\begin{equation}\label{eqn:frech_norm}
\langle v,x-\bar x\rangle\leq o(\|x-\bar x\|) \qquad \textrm{as }x\to \bar x \textrm{ in }\cX.
\end{equation}

\section{Weak convexity, proximal smoothness, and three-point inequality}

In this section, 
we
introduce
and
relate
the notions of weak convexity and proximal smoothness. 
We
then connect 
these properties to our key algorithmic tool: the three-point inequality. 
%Both of these notions are simple to define. 
First, we say that a function $f\colon\R^d\to\R\cup\{\infty\}$ is {\em $\rho$-weakly convex}, for some $\rho>0$, if the perturbed function $x\mapsto f(x)+\frac{\rho}{2}\|x\|^2$ is convex. Second, following \cite{clarkelower}, we say that a closed set $\cX$ is {\em $R$-proximally smooth} if the projection $\proj_{\cX}(x)$ is a singleton whenever $\dist(x,\cX)<R$. Although these conditions appear at first unrelated, they are deeply connected in the sense that weak convexity of a function $f$ is essentially equivalent to proximal smoothness of its epigraph. For a precise statement, we refer the reader to 
\cite[Section 5]{clarkelower}.

A unifying theme
of
both 
weakly convex functions
and
proximally smooth sets
is 
uniformity in subdifferential and normal cone constructions. 
These estimates
in 
turn 
lead to
the existence of a three point inequality, as we will soon see. 
For example, 
if $f$ 
is 
a $\rho$-weakly convex function, 
then the inequality
\begin{equation}\label{eqn:subgrad_ineq}
f(y)\ge f(x)+\dotp{v,y-x}-\frac{\rho}{2}\|y-x\|^2,\qquad \textrm{holds for all }x,y \in \R^d,~v\in \partial f(x).
\end{equation}
Likewise, 
the normal cone
of
a proximally smooth set
enjoys
a similarly uniform estimate, as the following result shows. The details can be found in \cite{clarkelower}.
\begin{lemma}[Proximally smooth sets]\label{lem_basic_prop_man}
	Consider an $R$-proximally smooth set $\cX$. The following are true.
	\begin{enumerate}
		\item {\bf (Lipschitz projector)} For any real $r\in (0, R)$ the estimate holds:
		$$\|\proj_{\cX}(x)-\proj_{\cX}(y)\|\le \frac{R}{R-r}\|x-y\|\qquad \forall x,y\in U_{\cX}(r).$$
		%		\item {\bf (Projections \& normals)} For any points $x\in\cX$, $y\in U_{\cX}(R)$, the equivalence holds:
		%		$$x=\proj_{\cX}(y)\qquad\Longleftrightarrow\qquad y-x\in N_{\cX}(x).$$ 
		\item\label{it3:lem} {\bf (Uniform normal inequality)} For any  point $x\in \cX$ and a normal $v\in N_{\cX}(x)$,  the inequality
		\begin{equation}\label{ineq:proximal_normal}
		\left\langle v,y-x\right\rangle\le \frac{\|v\|}{2R}\cdot \|y-x\|^2 \qquad\textrm{holds for all } y\in \cX.
		\end{equation}
	\end{enumerate}
\end{lemma}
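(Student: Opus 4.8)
The plan is to prove the \emph{uniform normal inequality} (item~\ref{it3:lem}) first, and then obtain the \emph{Lipschitz projector} estimate from it by a short hypomonotonicity computation; this ordering is cleaner than the reverse.

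First I would reduce item~\ref{it3:lem} to a single geometric claim. Fix $x\in\cX$ and $v\in N_{\cX}(x)$; since both sides of~\eqref{ineq:proximal_normal} are positively homogeneous in $v$, I may assume $\norm{v}=1$. The core claim is that the entire open segment projects back onto $x$, namely $\proj_{\cX}(x+tv)=x$ for all $t\in[0,R)$. Granting this, the inequality is a one-line computation: the identity $\proj_{\cX}(x+tv)=x$ means exactly that $\dist(x+tv,\cX)=t$, so $\norm{x+tv-y}\ge t$ for every $y\in\cX$; expanding the square and cancelling $t^2$ gives $\ip{v,y-x}\le\frac{1}{2t}\norm{y-x}^2$, and letting $t\uparrow R$ yields $\ip{v,y-x}\le\frac{1}{2R}\norm{y-x}^2$. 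Undoing the normalization recovers~\eqref{ineq:proximal_normal}.

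To establish the projection claim I would argue in two stages, and this is the step I expect to be the main obstacle. The first stage is a segment-monotonicity fact valid for \emph{any} closed set: if $p\in\proj_{\cX}(u)$ then $p\in\proj_{\cX}(p+s(u-p))$ for all $s\in[0,1]$, which follows by expanding $\norm{p+s(u-p)-y}^2-\norm{p+s(u-p)-p}^2$ and using the elementary bound $\ip{u-p,y-p}\le\tfrac{1}{2}\norm{y-p}^2$. This reduces the task to (i) producing one radius $t_0>0$ with $\proj_{\cX}(x+t_0v)=x$, and (ii) propagating the identity outward along the ray up to radius $R$. Step (i) is exactly where proximal smoothness is indispensable: for a general closed set a Fr\'echet normal need not realize the projection at any positive distance, whereas on an $R$-proximally smooth set the Fr\'echet, proximal, and limiting normal cones coincide (see~\cite[Section~5]{clarkelower}), so $v$ is a genuine proximal normal. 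For step (ii) I would run a continuation argument: letting $T$ be the supremum of the $\tau\in(0,R)$ for which the identity holds on $[0,\tau]$, one uses continuity of the single-valued projection on the open tube $U_{\cX}(R)$ to see that the identity persists at $t=T$, and that $T<R$ would allow pushing slightly past $T$, forcing $T=R$. Converting the global single-valuedness hypothesis into this statement about one individual ray is the delicate point.

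Finally, I would deduce the Lipschitz projector estimate from item~\ref{it3:lem}. Fix $x,y\in U_{\cX}(r)$ and set $p=\proj_{\cX}(x)$, $q=\proj_{\cX}(y)$, so that $x-p$ and $y-q$ are normals at $p$ and $q$ of norm $\dist(x,\cX),\dist(y,\cX)<r$. Applying~\eqref{ineq:proximal_normal} at $p$ with the test point $q$ and at $q$ with the test point $p$, then adding the two estimates, gives
\[
\ip{(x-p)-(y-q),\,q-p}\le \frac{r}{R}\norm{p-q}^2 .
\]
Rewriting the left-hand side as $\ip{x-y,\,q-p}+\norm{q-p}^2$ and invoking Cauchy--Schwarz produces $\tfrac{R-r}{R}\norm{p-q}^2\le\norm{x-y}\,\norm{p-q}$, which is precisely $\norm{\proj_{\cX}(x)-\proj_{\cX}(y)}\le\frac{R}{R-r}\norm{x-y}$.
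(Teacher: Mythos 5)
The paper itself does not prove this lemma; it simply defers to \cite{clarkelower}, so there is no in-paper argument to compare against and your proposal must stand on its own. Its architecture is the classical one (Federer's): show every unit normal $v\in N_{\cX}(x)$ is \emph{realized} at every radius $t<R$, i.e.\ $\proj_{\cX}(x+tv)=x$, deduce \eqref{ineq:proximal_normal} by expanding $\norm{x+tv-y}^2\ge t^2$, and then obtain the Lipschitz estimate by adding two instances of \eqref{ineq:proximal_normal} and applying Cauchy--Schwarz. The three computations you actually carry out --- the expansion giving $\ip{v,y-x}\le\frac{1}{2t}\norm{y-x}^2$, the segment-monotonicity fact for arbitrary closed sets, and the hypomonotonicity derivation of item~1 with the correct constant $R/(R-r)$ --- are all correct.

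The genuine gap is the propagation step, which you flag but do not close. Knowing $\proj_{\cX}(x+tv)=x$ on $[0,T]$ with $T<R$, continuity of the single-valued projection on $U_{\cX}(R)$ only yields $\proj_{\cX}(x+(T+\epsilon)v)\to x$ as $\epsilon\downarrow 0$; it does not yield the equality $\proj_{\cX}(x+(T+\epsilon)v)=x$ for any positive $\epsilon$, and nothing in your sketch supplies a mechanism for it. This is precisely where the content of the theorem lives, and the known arguments are global rather than confined to the single ray: either one first establishes the quantitative Lipschitz estimate of item~1 and uses it to force $\proj_{\cX}(x+(T+\epsilon)v)=x$ (Federer's order, which would make your proposed ordering circular), or one shows that $u\mapsto\tfrac12\norm{u}^2-\tfrac12\dist(u,\cX)^2$ is convex and, by single-valuedness, differentiable on $U_{\cX}(R)$ with gradient $\proj_{\cX}$, concludes that $\dist(\cdot,\cX)$ is $C^1$ with unit gradient on $U_{\cX}(R)\setminus\cX$, and integrates the eikonal equation along the ray --- this is essentially the route in \cite{clarkelower}. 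A secondary concern is step~(i): the coincidence of Fr\'echet and proximal normal cones that you invoke is itself derived in \cite{clarkelower} from the very inequality \eqref{ineq:proximal_normal}; to avoid circularity you should prove \eqref{ineq:proximal_normal} for proximal normals first and then extend to Fr\'echet normals by a limiting argument, since the inequality passes to limits of normals at nearby base points. As it stands, the proposal correctly isolates, but does not supply, the one hard ingredient.
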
	

With the uniformity properties in hand, we turn to the \emph{three point inequality}: a key tool in the  complexity analysis of algorithms for convex optimization. Namely, consider a convex function $f$, a convex set $\cX$, and an arbitrary point $x$. Then the  point $\hat x=\argmin_{y\in\cX} f(y)+\frac{\beta}{2}\|y-x\|^2$ satisfies the estimate:
$$\frac{\beta}{2}\|y-\hat{x}\|^2\leq \left(f(y)+\frac{\beta}{2}\|y-x\|^2\right)-\left(f(\hat x)-\frac{\beta}{2}\|x-\hat{x}\|^2\right)\qquad \forall y\in\cX.$$ 
Indeed, this estimate is a direct consequence of strong convexity of $f+\frac{\beta}{2}\|\cdot-x\|^2$ and convexity of $\cX$.  
More generally, one can replace the square Euclidean norm with a Bregman divergence and maintain a similar inequality; see e.g. \cite{beck,nesterov2013gradient,teboulle2018simplified,chen1993convergence}. 
We will see in Lemma~\ref{lem:3_pt_ineq} that when $f$ is weakly convex and $\cX$ is proximally smooth, a three point inequality is still valid---an essential ingredient of the forthcoming results. 
We first record the following lemma, which we will routinely use in the sequel.
We omit the proof since it follows directly from definitions.
\begin{lemma}\label{lem:prox_point_bound}
	Consider a function $g\colon \RR^d \to \RR$ that is $L$-Lipschitz continuous on a set $\cX\subseteq \RR^d$. Then for any point $x \in \cX$,  every point $\displaystyle \hat x \in \argmin_{y \in \cX} \left\{g(y) + \frac{\beta}{2} \|x - y\|^2\right\}$  satisfies the estimate $
	\|\beta(x - \hat x)\| \leq 2L.
	$
\end{lemma}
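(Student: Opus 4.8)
The plan is to exploit the defining optimality of $\hat x$ by testing the minimization against a single, conveniently chosen feasible competitor: the basepoint $x$ itself, which lies in $\cX$ by hypothesis. Since $\hat x$ minimizes $g(y) + \frac{\beta}{2}\|x-y\|^2$ over $\cX$, comparing its value against the value at $y = x$ yields
\[
g(\hat x) + \frac{\beta}{2}\|x - \hat x\|^2 \;\leq\; g(x) + \frac{\beta}{2}\|x - x\|^2 \;=\; g(x),
\]
and hence $\frac{\beta}{2}\|x - \hat x\|^2 \leq g(x) - g(\hat x)$.

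Next I would control the right-hand side using the only remaining hypothesis, namely $L$-Lipschitz continuity of $g$ on $\cX$. Because both $x$ and $\hat x$ belong to $\cX$, we have $g(x) - g(\hat x) \leq L\|x - \hat x\|$. Chaining this with the previous estimate produces the quadratic-versus-linear bound
\[
\frac{\beta}{2}\|x - \hat x\|^2 \;\leq\; L\,\|x - \hat x\|.
\]

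Finally I would cancel one power of $\|x - \hat x\|$ to isolate the desired quantity. The only point requiring a moment's care is the degenerate case $\hat x = x$, in which the conclusion $\|\beta(x - \hat x)\| = 0 \leq 2L$ holds trivially; otherwise $\|x - \hat x\| > 0$ and dividing through by it gives $\frac{\beta}{2}\|x - \hat x\| \leq L$, i.e.\ $\|\beta(x - \hat x)\| \leq 2L$, as claimed. I do not anticipate any genuine obstacle: the whole argument is a one-line consequence of optimality together with the Lipschitz bound, and its entire content lies in the trick of comparing the minimizer $\hat x$ against the basepoint $x$. This makes the estimate a direct analogue of the familiar bound $\|\beta(x-\hat x)\| \leq 2L$ for a single proximal step with an $L$-Lipschitz objective.
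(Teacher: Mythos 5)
Your argument is correct and is precisely the standard one-line derivation the paper has in mind when it says the proof ``follows directly from definitions'': test optimality of $\hat x$ against the feasible competitor $y=x$, invoke the Lipschitz bound $g(x)-g(\hat x)\le L\|x-\hat x\|$, and cancel a factor of $\|x-\hat x\|$ (handling the trivial case $\hat x = x$ separately, as you do). Nothing is missing.
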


\begin{lemma}[Three-point inequality]\label{lem:3_pt_ineq}
	Let $f\colon\R^d\to\R$ be a $\rho$-weakly convex function and let $\cX\subseteq\R^d$ be an $R$-proximally smooth set. Suppose  that there exists $L>0$ such that $f$ is $L$-Lipschitz continuous on some neighborhood $U$ of $\cX$. 
	Fix a real $\beta>\rho+\frac{3L}{R}$, a point $x\in U$, and define the  point 
	$\tilde{x}:=\argmin_{y\in \cX}~f(y)+\frac{\beta}{2}\|y-x\|^2.$ Then for any point $y\in U$, the following inequality holds:
	\[
	f(y)-f(\tilde{x})\geq \frac{\beta-\rho-\frac{3L}{R}}{2}\|y-\tilde{x}\|^2+\frac{\beta}{2}\|x-\tilde{x}\|^2-\frac{\beta}{2}\|y-x\|^2.
	\]
\end{lemma}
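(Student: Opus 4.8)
The plan is to transfer the classical convex three-point argument to the present setting, substituting its two convexity-dependent ingredients by their uniform nonconvex surrogates: the weak-convexity subgradient inequality \eqref{eqn:subgrad_ineq} in place of the usual convex subgradient inequality, and the uniform normal inequality \eqref{ineq:proximal_normal} of Lemma~\ref{lem_basic_prop_man} in place of the first-order optimality condition on a convex set. The hypothesis $\beta>\rho+\tfrac{3L}{R}$ will play the role that a positive strong-convexity modulus plays in the convex estimate \eqref{eq:3point}.

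First I would record first-order optimality at $\tilde x$. Since $\tilde x$ minimizes the locally Lipschitz, $\rho$-weakly convex function $\phi:=f+\tfrac{\beta}{2}\|\cdot-x\|^2$ over the proximally smooth set $\cX$, the generalized Fermat rule together with the subdifferential sum rule (valid by regularity of both $\phi$ and $\delta_{\cX}$) furnishes a subgradient $v\in\partial f(\tilde x)$ and a normal $w\in N_{\cX}(\tilde x)$ with
\[
w=-v-\beta(\tilde x-x).
\]
I would then assemble the estimate at a comparison point $y\in\cX$ from three facts: (i) weak convexity, $f(y)\geq f(\tilde x)+\langle v,y-\tilde x\rangle-\tfrac{\rho}{2}\|y-\tilde x\|^2$; (ii) the substitution $v=-w+\beta(x-\tilde x)$; and (iii) the polarization identity $2\langle x-\tilde x,y-\tilde x\rangle=\|y-\tilde x\|^2+\|x-\tilde x\|^2-\|y-x\|^2$. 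Steps (ii)–(iii) reproduce the three squared-norm terms of the claim together with a residual $-\langle w,y-\tilde x\rangle$, which the uniform normal inequality \eqref{ineq:proximal_normal} bounds below by $-\tfrac{\|w\|}{2R}\|y-\tilde x\|^2$; collecting terms leaves the coefficient of $\|y-\tilde x\|^2$ equal to $\tfrac12\bigl(\beta-\rho-\tfrac{\|w\|}{R}\bigr)$.

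The crux—and what I expect to be the main obstacle—is controlling the magnitude of the normal vector, namely $\|w\|\leq 3L$. By the triangle inequality $\|w\|\leq\|v\|+\beta\|\tilde x-x\|$; the term $\|v\|$ is at most $L$ because $\tilde x\in\cX$ lies in the Lipschitz neighborhood $U$, while $\beta\|\tilde x-x\|\leq 2L$ by Lemma~\ref{lem:prox_point_bound}. Inserting $\|w\|\leq 3L$ converts the coefficient into the advertised $\tfrac12\bigl(\beta-\rho-\tfrac{3L}{R}\bigr)$, which is positive precisely under the standing assumption on $\beta$, completing the argument. I would emphasize that both quantitative inputs are intrinsically tied to feasibility: Lemma~\ref{lem:prox_point_bound} controls $\beta\|\tilde x-x\|$ through the base point, and the uniform normal inequality \eqref{ineq:proximal_normal} is available only at comparison points of $\cX$. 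This reliance on feasibility is the genuine nonconvex counterpart of the difficulty flagged after \eqref{eq:3point}, and it is essential here, since \eqref{ineq:proximal_normal} may fail at points lying off $\cX$ along an outward normal direction.
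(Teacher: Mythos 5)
Your proposal is correct and follows essentially the same route as the paper's proof: first-order optimality giving $\beta(x-\tilde x)=v+w$, the weak-convexity subgradient inequality combined with the uniform normal inequality of Lemma~\ref{lem_basic_prop_man}, the polarization/complete-the-square step, and the bound $\|w\|\le\|v\|+\beta\|x-\tilde x\|\le 3L$ via Lipschitzness and Lemma~\ref{lem:prox_point_bound}. Your observation that the argument only yields the estimate for comparison points $y\in\cX$ (rather than all $y\in U$ as stated) is accurate and matches what the paper actually proves and uses.
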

\begin{proof}
	Appealing to Lemma~\ref{lem_basic_prop_man}(\ref{it3:lem}), we deduce
	$
	\left\langle w,y-z\right\rangle\leq \frac{\|w\|}{2R}\cdot\|y-z\|^2,
	$
	for all $z,y\in\cX$ and $w\in N_{\cX}(z)$.
	Adding this estimate to the subgradient inequality \eqref{eqn:subgrad_ineq} for $f$ yields
	\begin{equation}\label{eqn:proximal_ineq}
	f(y)\geq f(z)+\langle v+w,y-z\rangle-\frac{1}{2}\left(\rho+\frac{\|w\|}{R}\right)\|y-z\|^2\qquad \forall z,y\in\cX, ~v\in \partial f(z).
	\end{equation}
	Now set $z:=\tilde{x}$ and note the equality 
	$\beta(x-\tilde{x})=v+w$ for some vectors $v\in\partial f(\tilde{x})$ and $w\in N_{\cX}(\tilde{x})$, which follows by first order optimality conditions. 
	%Define now $\zeta:=\proj_{T_{\cX}(\hat x)}(v')$ and $w:=\proj_{N_{\cX}(\hat x)}(v')+w'$. Observe the expressions
	%$$\zeta+w=v'+w'=\bar\rho(x-\hat{x})\qquad \textrm{and}\qquad w\in N_{\cX}(\hat x).$$
	Completing the square in the right-hand-side of \eqref{eqn:proximal_ineq} then yields 
	\begin{equation}\label{eqn:3point_basic}
	\begin{aligned}
	f(y)
	&\geq f(\tilde{x})-\frac{\beta}{2}\|y-x\|^2+\frac{\beta}{2}\|x-\tilde{x}\|^2+\frac{(\beta-\rho)-\|w\|/R}{2}\|y-\tilde{x}\|^2.
	\end{aligned}
	\end{equation}
	To complete the proof, we upper bound $\|w\|$ using the triangle inequality, 
	$\|w\|\leq \|v\|+\beta\|x-\tilde{x}\|\leq L+\beta\|x-\tilde{x}\|\leq 3L,$ where the second inequality follows from Lipschitz continuity of $f$ and last inequality follows from Lemma~\ref{lem:prox_point_bound}. 
\end{proof}

Before passing to our algorithmic framework,
we
briefly comment
on the prevalence of weakly convex functions and proximally smooth sets. 
First,
many sets 
of 
practical interest
are
proximally smooth, 
including closed convex sets, sublevel sets of weakly convex functions (assuming a constraint qualification\cite{adly2016preservation}), and compact $C^2$-submanifolds of $\RR^d$.
Weakly convex functions, in turn, are widespread in applications and are typically easy to recognize. One common source to keep in mind is the composite function class, $f(x):=h(c(x)),$
where $h\colon \R^m \to \R$ is convex and $L$-Lipschitz and $c\colon\R^d\to \R^m$ is a $C^1$-smooth map with $\beta$-Lipschitz continuous Jacobian. A quick argument shows that the composite
function $f$ is $L\beta$-weakly convex; for a short argument see e.g. \cite[Lemma 4.2]{comp_DP}.  In particular, 
a variety 
of 
practical problems in statistical signal recovery
are 
weakly convex; see~\cite{davis2019stochastic,charisopoulos2019composite,charisopoulos2019low,li2018nonconvex} for examples.

\section{Main results}\label{sec:prob_stat_meas}
Henceforth, we consider the optimization problem
\begin{equation}\label{eqn:target_prob}
\min ~ f(x)\quad \textrm{subject to}\quad x\in \cX,
\end{equation}
for some closed function $f\colon\R^d\to\R$ that is bounded from below on a closed set $\cX\subseteq\R^d$. We will place further assumptions on $f$ and $\cX$ shortly.

Similar to the arguments in \cite{davis2019stochastic}, 
we 
base
our analysis 
of 
algorithms 
for 
the problem \eqref{eqn:target_prob} 
on
the following three constructions. 
Namely, 
define 
the {\em Moreau envelope} 
and 
the {\em proximal map}, respectively, by
$$\cM_{\lambda}(x)= \min_{y\in \cX}~ \left\{f(y)+\frac{1}{2\lambda}\|y-x\|^2\right\},\qquad\cP_{\lambda}(x):=\argmin_{y\in \cX}~ \left\{f(y)+\frac{1}{2\lambda}\|y-x\|^2\right\},$$
and define the {\em stationarity measure}:
$$\mathcal{C}_{\lambda}(x):=\lambda^{-1}\cdot\dist(x,\cP_{\lambda}(x)).$$
As 
we alluded to 
in the introduction, 
the quantity  $\mathcal{C}_{\lambda}(x)$ 
has 
an intuitive interpretation 
in 
terms 
of 
near-stationarity 
for 
the target problem \eqref{eqn:target_prob}. 
Namely,
fix a point $x\in \R^d$. 
Then the very definition 
of
the  measure $\mathcal{C}_{\lambda}(x)$ 
guarantees 
\begin{equation*}
\left\{\begin{aligned}
\|\hat x-x\|&=\lambda\cdot \cC_{\lambda}(x)\\
f(\hat x)&\leq f(x)\\
\dist(0, \partial (f+\delta_{\cX})(\hat{x}))&\leq \cC_{\lambda}(x)
\end{aligned}\right\},
\end{equation*}
where $\hat x\in \cP_{\lambda}(x)$ is the closest point to $x$.
Thus a small value $\mathcal{C}_{\lambda}(x)$ 
implies 
that $x$ 
is 
{\em near} some point $\hat x$ 
that 
is 
{\em nearly stationary} 
for 
the problem.
The forthcoming analysis 
of 
Algorithms~\ref{alg:stoc_prox_noret} 
and 
\ref{alg:stoc_prox} 
will establish 
a rate at which  $\cC_{\lambda}(x_t)$ 
tends to 
zero 
along 
the iterate sequence $\{x_t\}$. 
It 
is 
worthwhile 
to mention 
that if $f$ is weakly convex and $\cX$ is proximally smooth, then $\cC_{\lambda}(x_t)$ can be 
interpreted as the norm of the gradient of the Moreau envelope $\cM_{\lambda}(\cdot)$ when $\lambda$ is sufficiently large. For the sake of brevity, however, we will not explore this connection further.

\subsection{Model-based minimization over proximally smooth sets}\label{sec:simpl_meth}
In this section, 
we 
present 
an algorithm 
for 
the problem~\eqref{eqn:target_prob},
which 
directly extends
the model-based algorithm 
in \cite{davis2019stochastic} 
to 
the setting 
when 
the constraint set $\cX$ 
is 
not convex, 
but is instead proximally smooth.
The procedure, 
summarized as Algorithm~\ref{alg:stoc_prox_noret}, 
assumes 
access 
to 
a family 
of 
stochastic models $f_{x}(\cdot,\xi)$ 
of 
the objective function 
indexed 
by the base point $x\in \cX$ 
and 
random elements $\xi\sim P$. 
In each iteration $t$, 
the algorithm 
samples
a stochastic model $f_{x_t}(\cdot,\xi_t)$ 
of
the objective,
centered  
at the current iterate $x_t$.
Then it declares the next iterate $x_{t+1}$ 
to be 
the minimizer 
of 
the function $f_{x_t}(\cdot,\xi_t)+\frac{\beta_t}{2}\|\cdot-x_t\|^2$ 
over 
$\cX$.

\begin{algorithm}[H]
	{\bf Input:} initialization $x_0\in \R^d$,   a sequence $\beta_t>0$,  and an iteration count $T\in\mathbb{N}$.
	\break{\bf Step } $t=0,\ldots,T$:	
	%	\begin{equation*}
	\begin{align}
	&\textrm{Sample } \xi_t \sim P\notag\\
	& \textrm{Choose }  x_{t+1} \in \argmin_{x \in \cX}~ \left\{f_{x_t}(x,\xi_t) + \frac{\beta_t}{2} \|x - x_t\|^2\right\}\label{eqn:subprob_to_solve}%\\
	%	& \textrm{Set }  x_{t+1} = \cR_{x_t}(\tilde x_t)
	\end{align}
	\caption{Stochastic Model Based Algorithm
		%: PSSM($x_0,T,\{\alpha_t\}$)
	}
	\label{alg:stoc_prox_noret}
\end{algorithm}
\smallskip

The success 
of 
Algorithm~\ref{alg:stoc_prox_noret} 
relies 
not only  
on the control 
of 
the error $\EE[f_{x}(y,\xi)-f(y)]$, 
but  		  
on the regularity 
of
the models $f_{x_t}$ (weak-convexity) 
and 
of 
the constraint set $\cX$ (proximal-smoothness). Henceforth, we impose the following assumptions.

\begin{assumption}%\label{asmpt:A}
	{\rm
		Fix a probability space $(\Omega,\mathcal{F},P)$ and equip $\R^d$ with the Borel $\sigma$-algebra. 
		We assume that there exist real $\osa, \eta, L, R>0$ satisfying the following properties. 
		\begin{enumerate}
			\item[(A1)] {\bf (Sampling)} It is possible to generate i.i.d.\ realizations $\xi_1,\xi_2, \ldots \sim P$.
			\item[(A2)] {\bf (One-sided accuracy)} There is an open set $U$ containing $\cX $ and a measurable function $(x,y,\xi)\mapsto f_x(y,\xi)$, defined on $U\times U\times\Omega$, satisfying  $$\EE_{\xi}\left[f_x(x,\xi)\right]=f(x)
			\quad \textrm{and} 
			\quad\EE_{\xi}\left[f_x(y,\xi)-f(y)\right]
			\leq\frac{\osa}{2}\|y-x\|^2\qquad \forall x,y\in U.$$
			\item[(A3)] {\bf (Proximal smoothness and weak-convexity)} The set $\cX$ is $R$-proximally smooth and the function $f_x(\cdot,\xi)$ is $\eta$-weakly convex $\forall x\in U$, a.e. $\xi\sim P$.

			\item[(A4)] {\bf (Lipschitz property)} $f$ is $L$-Lipschitz continuous on $\cX$; and for all $x\in \cX$ and a.e.\ $\xi\sim P$, the function $f_x(\cdot, \xi)$ is $L$-Lipschitz continuous on some neighborhood of $\cX$.
	\end{enumerate}}
\end{assumption}

The assumptions $(A1)-(A4)$ 
are 
almost identical 
to 
the ones 
used 
in \cite{davis2019stochastic},
except that $\cX$ 
is not required to be 
convex, 
but only proximally smooth. 
In particular, 
there 
is 
a wide variety 
of 
models $f_x(\cdot,\xi)$ one 
can use
within Algorithm~\ref{alg:stoc_prox_noret}. 
Table~\ref{table:functionmodels} 
and 
Figure~\ref{fig:illustr_lower_model} 
outline 
a few possibilities. 
Properties (A1)-(A3) are verified for  the proximal point, subgradient, and prox-linear models  in \cite[Section 4.3]{davis2019stochastic}, and they are verified for the clipped subgradient model  in \cite{asi2}. 
The class of proximally smooth sets is 
broad 
and 
includes 
all convex sets, 
sublevel sets of weakly convex functions \cite{adly2016preservation},
and 
compact $C^2$-submanifolds of $\R^d$.

The following theorem summarizes convergence guarantees for Algorithm~\ref{alg:stoc_prox_noret} that directly parallel and generalize the results in \cite{davis2019stochastic}. 

\begin{theorem}[Convergence guarantees]\label{thm:simple_result}
	Define the constant $\gamma:=\eta+\frac{3L}{R}$ and fix a real $\bar{\rho} > \gamma + \osa$ and a stepsize sequence $\beta_t \in (\bar\rho, \infty)$. Let $\{x_t\}_{t=0}^T$ be the iterates generated by  Algorithm~\ref{alg:stoc_prox_noret} and let  $t^*\in \{0,\ldots,T\}$ be sampled according to the discrete probability distribution
	$\mathbb{P}(t^*=t)\propto\frac{1}{\beta_t - \eta-\frac{3L}{R}}.$
	Then the point $x_{t^*}$ satisfies the estimate:	
	%	Let $\{x_t\}_{t=0}^T$ be the iterates produced by Algorithm~\ref{alg:stoc_prox}. Then the estimate holds:
	$$\EE[\cC_{1/\bar{\rho}}(x_{t^*})^2]\leq \frac{\bar{\rho}(f(x_{0})-\min_{\cX} f)+  \frac{\bar \rho^2L^2}{2}\cdot\sum_{t=0}^T\frac{1}{\beta_t(\beta_t - \gamma)}}{\sum_{t=0}^T \frac{\bar \rho - \gamma - \osa}{\beta_t-\gamma}}.$$
	%	where $t^*\in [0,T]$ is a random index chosen according to ${\rm Pr}(t^*=t)\propto\frac{1}{\beta_t-\gamma}$.	
	%	Then the point $x_t^*$ returned by Algorithm~\ref{alg:lower_model} satisfies:
	In particular, if Algorithm~\ref{alg:stoc_prox} sets $\bar \rho=2(\eta+\osa+\frac{3L}{R})$ and $\beta_t=\max\left\{\gamma,\sqrt{\frac{\bar{\rho}L^2(T+1)}{2\Delta}}\right\}$, for some real $\Delta \ge f(x_0) - \min_{\cX} f$, then the estimate holds:
	$$\EE[\cC_{1/\bar{\rho}}(x_{t^*})^2]\leq \max\left\{\frac{\bar \rho \Delta }{T+1}, L\sqrt{\frac{2\bar \rho \Delta }{T+1}}\right\}.$$
\end{theorem}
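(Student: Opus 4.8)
The plan is to treat the Moreau envelope $\cM_{1/\bar\rho}$ as an approximate Lyapunov function, establish a one-step recursion, and telescope. Writing $\hat x_t:=\cP_{1/\bar\rho}(x_t)$ and letting $\E_t$ denote expectation conditioned on $x_t$ (so $\hat x_t$ is $\E_t$-measurable), the target recursion is
\[
\E_t[\cM_{1/\bar\rho}(x_{t+1})]\le \cM_{1/\bar\rho}(x_t)-\frac{\bar\rho(\bar\rho-\gamma-\osa)}{\beta_t-\gamma}\|x_t-\hat x_t\|^2+\frac{\bar\rho L^2}{2\beta_t(\beta_t-\gamma)}.
\]
Because $\cC_{1/\bar\rho}(x_t)^2=\bar\rho^2\|x_t-\hat x_t\|^2$, taking full expectations, summing over $t=0,\dots,T$, using $\cM_{1/\bar\rho}(x_0)\le f(x_0)$ and $\cM_{1/\bar\rho}(x_{T+1})\ge\min_\cX f$, and dividing by $\sum_t(\beta_t-\gamma)^{-1}$ reproduces the first displayed estimate once $t^*$ is drawn with $\PP(t^*=t)\propto(\beta_t-\gamma)^{-1}$. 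The concrete estimate then follows by substituting the constant stepsize, using $\bar\rho-\gamma-\osa=\bar\rho/2$, and balancing the two resulting terms; a short case split on which branch of the maximum defining $\beta_t$ is active gives the stated $\max$.

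The heart of the argument is the recursion, which I would assemble as follows. First, feasibility $\hat x_t\in\cX$ and the definition of the envelope give $\cM_{1/\bar\rho}(x_{t+1})\le f(\hat x_t)+\tfrac{\bar\rho}{2}\|\hat x_t-x_{t+1}\|^2$. Second, I would control $\|\hat x_t-x_{t+1}\|^2$ by applying the three-point inequality (Lemma~\ref{lem:3_pt_ineq}) to the model $f_{x_t}(\cdot,\xi_t)$, which is $\eta$-weakly convex and $L$-Lipschitz with $\beta_t>\bar\rho>\gamma=\eta+\tfrac{3L}{R}$, at the test point $y=\hat x_t$, scaling the inequality by $\bar\rho/(\beta_t-\gamma)$ so its leading term matches the envelope term. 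Substituting the exact identity $f(\hat x_t)=\cM_{1/\bar\rho}(x_t)-\tfrac{\bar\rho}{2}\|\hat x_t-x_t\|^2$ leaves a bound in terms of the model gap $f_{x_t}(\hat x_t,\xi_t)-f_{x_t}(x_{t+1},\xi_t)$, a benign positive multiple $\tfrac{\bar\rho\gamma}{2(\beta_t-\gamma)}\|\hat x_t-x_t\|^2$, and a negative term $-\tfrac{\bar\rho\beta_t}{2(\beta_t-\gamma)}\|x_t-x_{t+1}\|^2$. For the model gap I would pass from $x_{t+1}$ back to $x_t$ by $L$-Lipschitzness, take $\E_t$, and invoke (A2) ($\E_\xi f_{x_t}(x_t,\xi)=f(x_t)$ and $\E_\xi f_{x_t}(\hat x_t,\xi)\le f(\hat x_t)+\tfrac{\osa}{2}\|\hat x_t-x_t\|^2$) to reduce the gap to $f(\hat x_t)-f(x_t)+\tfrac{\osa}{2}\|\hat x_t-x_t\|^2$; the accompanying $L\|x_t-x_{t+1}\|$ term then completes the square against the negative quadratic (Lemma~\ref{lem:prox_point_bound} guarantees the relevant points stay in range) to yield exactly the variance term $\tfrac{\bar\rho L^2}{2\beta_t(\beta_t-\gamma)}$.

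The remaining and most delicate step is bounding $f(\hat x_t)-f(x_t)$, and it is here that the sharp descent constant originates: the naive optimality bound $f(\hat x_t)-f(x_t)\le-\tfrac{\bar\rho}{2}\|\hat x_t-x_t\|^2$ loses a factor of two. Instead I would first note that $f$ itself satisfies the subgradient inequality \eqref{eqn:subgrad_ineq} with parameter $\eta+\osa$: combining (A2) with the $\eta$-weak convexity of the averaged model $y\mapsto\E_\xi f_x(y,\xi)$ exhibits, at each base point, a subgradient certifying $(\eta+\osa)$-weak convexity of $f$. Since $\bar\rho>\gamma+\osa=(\eta+\osa)+\tfrac{3L}{R}$, Lemma~\ref{lem:3_pt_ineq} applies to $f$ on the $R$-proximally smooth set $\cX$ at the proximal point $\hat x_t$; evaluating it at $y=x_t$ yields the sharp descent $f(\hat x_t)-f(x_t)\le-\tfrac{2\bar\rho-\gamma-\osa}{2}\|\hat x_t-x_t\|^2$. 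Feeding this into the previous step and collecting the $\|\hat x_t-x_t\|^2$ coefficients gives precisely $-\bar\rho(\bar\rho-\gamma-\osa)/(\beta_t-\gamma)$, closing the recursion.

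The main obstacle, and the only place that genuinely uses proximal smoothness, is twofold. The three-point inequality must survive the nonconvexity of $\cX$, which is exactly the content of Lemma~\ref{lem:3_pt_ineq} and is why the penalty $\tfrac{3L}{R}$ enters through $\gamma$ in place of $\eta$. More subtly, obtaining the correct descent constant forces a \emph{second} application of that inequality, to $f$ at the proximal point, rather than the naive optimality estimate; keeping the scalings aligned so that the spurious positive multiple of $\|\hat x_t-x_t\|^2$ is exactly cancelled, while the $\|x_t-x_{t+1}\|^2$ contributions fuse by completion of squares into the clean variance term, is the calculation that demands care. Everything downstream of the recursion—telescoping, the randomized index, and the stepsize substitution—is routine bookkeeping.
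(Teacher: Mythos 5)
Your proposal is correct and shares the paper's overall architecture---apply the three-point inequality (Lemma~\ref{lem:3_pt_ineq}) to the sampled model at the test point $\hat x_t$, pass to $f$ via (A2) and Lipschitzness, absorb $L\delta-\tfrac{\beta_t}{2}\delta^2$ by completing the square, and telescope the Moreau envelope---but it diverges at one substantive step. The paper bounds $f(\hat x_t)-f(x_t)$ by the plain optimality inequality for the proximal subproblem, $f(\hat x_t)\le f(x_t)-\tfrac{\bar\rho}{2}\|\hat x_t-x_t\|^2$, whereas you apply Lemma~\ref{lem:3_pt_ineq} a \emph{second} time, to $f$ itself at the proximal point, obtaining the stronger $f(\hat x_t)-f(x_t)\le-\tfrac{2\bar\rho-\gamma-\osa}{2}\|\hat x_t-x_t\|^2$. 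This is not a cosmetic difference: tracking the algebra, the paper's route produces the per-step decrease $\tfrac{\bar\rho-\gamma-\osa}{2\bar\rho(\beta_t-\gamma)}\cC_{1/\bar\rho}(x_t)^2$ (the factor $\tfrac12$ is silently dropped in the last display of the paper's proof), so it actually establishes the first estimate with the numerator doubled; your sharper descent restores exactly the missing factor of two and delivers the constants of the theorem as stated. What your route costs is an extra hypothesis check: Lemma~\ref{lem:3_pt_ineq} applied to $f$ requires $f$ to satisfy the uniform subgradient inequality \eqref{eqn:subgrad_ineq} with parameter $\eta+\osa$ for the \emph{particular} subgradient appearing in the optimality condition $\bar\rho(x_t-\hat x_t)\in\partial f(\hat x_t)+N_{\cX}(\hat x_t)$, together with Lipschitz continuity of $f$ on a neighborhood of $\cX$; neither is literally among (A1)--(A4). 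Your sketch (each $\partial\,\EE_\xi f_x(\cdot,\xi)(x)$ furnishes a minorant certifying $(\eta+\osa)$-weak convexity) is the standard argument from \cite{davis2019stochastic}, but to license the inequality for \emph{every} subgradient of $f$ at $\hat x_t$ you should upgrade it to genuine $(\eta+\osa)$-weak convexity of $f$ on a convex neighborhood of $\cX$, which that argument does give. With that point made explicit, your proof is complete and, if anything, tighter than the one in the paper.
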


The following section develops a proof of Theorem~\ref{thm:simple_result} by combining the arguments in \cite{davis2019stochastic} with the three-point inequality proved in Lemma~\ref{lem:3_pt_ineq}.

\subsubsection{Proof of Theorem~\ref{thm:simple_result}}
In what follows, we let   $x_0,\ldots, x_t$ and $\xi_0,\ldots, \xi_{t}$ be the iterates and random elements generated by Algorithm~\ref{alg:stoc_prox}. We will use the shorthand $\EE_t$ to denote the expectation conditioned on $\xi_0,\ldots, \xi_{t-1}$.  We begin with a key one-step improvement lemma. 

\begin{lemma}[One-step improvement]\label{lem:one_step}
	Fix a real $\bar{\rho}>0$, define the constant $\gamma:=\eta+\frac{3L}{R}$ and a stepsize sequence $\beta_t >\gamma$. Then for every iterate $t$ and any proximal  point $\hat x_t\in\cP_{1/\bar \rho}(x_t)$, the estimate holds:  
	\begin{equation}\label{eqn:one_step_improv}
	\EE_t\left[\|\hat x_t-x_{t+1}\|^2\right]
	\leq \frac{\beta_t+\osa-\bar \rho}{\beta_t-\gamma}\|\hat x_t-x_t\|^2+ \frac{L^2}{\beta_t(\beta_t-\gamma)}.
	\end{equation}
\end{lemma}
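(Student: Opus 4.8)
The plan is to apply the three-point inequality of Lemma~\ref{lem:3_pt_ineq} to the subproblem that defines $x_{t+1}$ and then take the conditional expectation $\EE_t$. The model $f_{x_t}(\cdot,\xi_t)$ is $\eta$-weakly convex by (A3) and $L$-Lipschitz on a neighborhood of $\cX$ by (A4), and the stepsize obeys $\beta_t>\gamma=\eta+\tfrac{3L}{R}$; so the hypotheses of Lemma~\ref{lem:3_pt_ineq} hold with weak-convexity constant $\eta$, minimizer $\tilde{x}=x_{t+1}$, base point $x=x_t$, and feasible test point $y=\hat x_t\in\cX$. This produces
\[
f_{x_t}(\hat x_t, \xi_t) - f_{x_t}(x_{t+1}, \xi_t) \ge \frac{\beta_t - \gamma}{2}\|\hat x_t - x_{t+1}\|^2 + \frac{\beta_t}{2}\|x_t - x_{t+1}\|^2 - \frac{\beta_t}{2}\|\hat x_t - x_t\|^2,
\]
which I would rearrange to isolate $\tfrac{\beta_t-\gamma}{2}\|\hat x_t-x_{t+1}\|^2$ on the left-hand side.

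The heart of the argument is controlling the two model evaluations on the right after passing to $\EE_t$. The difficulty is that $x_{t+1}$ depends on the fresh sample $\xi_t$, whereas the one-sided accuracy (A2) bounds $\EE_\xi[f_x(y,\xi)-f(y)]$ only at points $y$ that are fixed relative to $\xi$; it therefore cannot be applied at $y=x_{t+1}$. To avoid evaluating $f$ at the random point $x_{t+1}$ altogether, I would split $f_{x_t}(\hat x_t,\xi_t)-f_{x_t}(x_{t+1},\xi_t)=[f_{x_t}(\hat x_t,\xi_t)-f_{x_t}(x_t,\xi_t)]+[f_{x_t}(x_t,\xi_t)-f_{x_t}(x_{t+1},\xi_t)]$ and bound the second bracket by $L\|x_t-x_{t+1}\|$ using the Lipschitz property of the single model $f_{x_t}(\cdot,\xi_t)$. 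Combining this with the surviving term $-\tfrac{\beta_t}{2}\|x_t-x_{t+1}\|^2$ and completing the square via $Ls-\tfrac{\beta_t}{2}s^2\le\tfrac{L^2}{2\beta_t}$ (with $s=\|x_t-x_{t+1}\|$) yields exactly the constant $\tfrac{L^2}{2\beta_t}$, which becomes the $\tfrac{L^2}{\beta_t(\beta_t-\gamma)}$ term after dividing through.

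For the remaining bracket I would take $\EE_t$ and invoke the two halves of (A2): since $x_t$ and $\hat x_t$ are measurable with respect to $\xi_0,\dots,\xi_{t-1}$, we obtain $\EE_t[f_{x_t}(x_t,\xi_t)]=f(x_t)$ exactly and $\EE_t[f_{x_t}(\hat x_t,\xi_t)]\le f(\hat x_t)+\tfrac{\osa}{2}\|\hat x_t-x_t\|^2$. The definition of the proximal point $\hat x_t\in\cP_{1/\bar\rho}(x_t)$, compared against the feasible competitor $x_t\in\cX$, then gives $f(\hat x_t)-f(x_t)\le-\tfrac{\bar\rho}{2}\|\hat x_t-x_t\|^2$. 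Substituting, gathering every $\|\hat x_t-x_t\|^2$ contribution into the single coefficient $\tfrac{\beta_t+\osa-\bar\rho}{2}$, and dividing by $\tfrac{\beta_t-\gamma}{2}$ delivers the claimed estimate~\eqref{eqn:one_step_improv}.

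The main obstacle is precisely the one flagged above: the dependence of $x_{t+1}$ on $\xi_t$ forbids a naive use of the one-sided accuracy, and the resolution---routing the comparison through the base point $x_t$, where the model is exact in expectation, at the cost of only a Lipschitz term---is the key device. Two minor technical points also need attention: I must use the measurability of $x_t,\hat x_t$ so that $\|\hat x_t-x_t\|^2$ and $\tfrac{L^2}{2\beta_t}$ factor out of $\EE_t$ as constants, and I must know $x_t\in\cX$ so that it is a legitimate competitor for both the proximal optimality inequality and the exactness relation $\EE_t[f_{x_t}(x_t,\xi_t)]=f(x_t)$; this holds automatically for every iterate generated by minimizing over $\cX$.
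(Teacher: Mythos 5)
Your proposal is correct and follows essentially the same route as the paper's proof: apply the three-point inequality of Lemma~\ref{lem:3_pt_ineq} to the subproblem with $y=\hat x_t$, route the model comparison through the base point $x_t$ at the cost of a Lipschitz term $L\|x_t-x_{t+1}\|$, invoke (A2) and the proximal optimality of $\hat x_t$, and absorb the Lipschitz term via $L\delta-\tfrac{\beta_t}{2}\delta^2\le\tfrac{L^2}{2\beta_t}$. The only cosmetic difference is that you complete the square pointwise in $\|x_t-x_{t+1}\|$ while the paper does so after setting $\delta=\sqrt{\EE\|x_t-x_{t+1}\|^2}$; both are valid.
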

\begin{proof}
	Fix an index $t$, and choose a proximal point
	$\hat x_t\in\cP_{1/\bar \rho}(x_t)$. Appealing to Lemma~\ref{lem:3_pt_ineq} with $f_{x_t}(\cdot,\xi_t)$ in place of $f$, and with $x=x_t$ and $y=\hat x_t$, we deduce
	\begin{align}
	\EE_t&\left[\frac{\beta_t-\gamma }{2}\|{\hat x}_t-x_{t+1}\|^2+\frac{\beta_t}{2}\|x_t-x_{t+1}\|^2-\frac{\beta_t}{2}\|{\hat x}_t-x_t\|^2\right]\notag
	\\
	&\leq \EE_t\left[f_{x_t}({\hat x}_t,\xi_t)-f_{x_t}(x_{t+1},\xi_t)\right]\notag\\
	&\leq \EE_t\left[f_{x_t}({\hat x}_t,\xi_t)-f_{x_t}(x_{t},\xi_t) \right]+L\EE_t[\|x_t-x_{t+1}\|]\label{eqn:it1_nee}\\
	&\leq f({\hat x}_t)-f(x_t)+\frac{\osa}{2}\|{\hat x}_t-x_t\|^2+L\EE_t\left[\|x_t-x_{t+1}\|\right]\label{eqn:it2_nee}\\
	&\leq  \frac{\osa-\bar \rho}{2}\|{\hat x}_t-x_t\|^2+L\EE_t\left[\|x_t-x_{t+1}\|\right]\label{eqn:it3_nee},
	%&\leq \frac{\tau-\bar \rho}{2}\|x-x_t\|^2+L\EE\left[\|x_t-x_{t+1}\|\right]+\frac{\rho}{2}\EE[\|x_t-x_{t+1}\|^2]
	\end{align}
	where \eqref{eqn:it1_nee} uses (A4), the estimate \eqref{eqn:it2_nee} uses (A2), and \eqref{eqn:it3_nee} follows from the definition of $\hat{x}_t$ as the proximal point.
	Setting $\delta=\sqrt{\EE\|x_t-x_{t+1}\|^2}$ and rearranging yields  
	%\begin{align*}
	%&\EE_t\left[\frac{\beta_t-\gamma_t }{2}\|x-x_{t+1}\|^2+\frac{\beta_t}{2}\|x_t-x_{t+1}\|^2-\frac{\beta_t}{2}\|x-x_t\|^2\right]\\
	%&\leq \frac{\tau-\bar \rho}{2}\|x-x_t\|^2+L\EE\left[\|x_t-x_{t+1}\|\right]+\frac{\rho}{2}\EE[\|x_t-x_{t+1}\|^2]
	%\end{align*}
	\begin{align*}
	\EE_t\left[\frac{\beta_t-\gamma }{2}\|\hat x_t-x_{t+1}\|^2\right]&\leq \frac{\beta_t+\osa-\bar \rho}{2}\|\hat x_t-x_t\|^2+L\delta-\frac{\beta_t}{2}\delta^2\\
	&\leq \frac{\beta_t+\osa-\bar \rho}{2}\|\hat x_t-x_t\|^2+ \frac{L^2}{2\beta_t},
	\end{align*}
	where the last inequality follows from maximizing the expression $L\delta-\frac{\beta_t}{2}\delta^2$ in $\delta\in \R$.
	This completes the proof of the lemma.
\end{proof}

The convergence guarantees now quickly follow.

\begin{proof}[Proof of Theorem~\ref{thm:simple_result}]
	Fix an iteration $t$ and a point  $\hat x_t\in\cP_{1/\bar \rho}(x_t)$. Then
	using the definition of the Moreau envelope and appealing to  Lemma~\ref{lem:one_step}, we deduce  
	\begin{align*}
	\EE_t[ \cM_{1/\bar \rho}(x_{t+1})] &\leq  \EE_t\left[f( \hat x_{t})  +\frac{\bar \rho}{2}\cdot \| x_{t+1} - \hat x_t \|^2\right]\\
	&\leq f( \hat x_{t}) +\frac{\bar \rho}{2}\left[\|\hat x_t-x_{t}\|^2+\left(\tfrac{\beta_t+\osa-\bar \rho}{\beta_t-\gamma}-1\right)\|\hat x_t-x_{t}\|^2  + \frac{L^2}{\beta_t(\beta_t-\gamma)}\right]\\
	&= \cM_{1/\bar \rho}(x_{t}) - \frac{\bar \rho- \gamma-\osa}{\bar \rho(\beta_t-\gamma)}\cC_{1/\bar \rho}(x_t)^2  + \frac{\bar\rho L^2}{2\beta_t(\beta_t-\gamma)}.
	\end{align*}
	Taking expectations, iterating the inequality, and using the tower rule  yields:
	%	Using the inequality $\varphi_{1/\bar \rho}(x_{t+1})\geq \min \varphi$ and rearranging, we arrive at
	$$\sum_{t=0}^T \frac{\bar \rho - \gamma - \osa}{\beta_t-\gamma}\EE[\cC_{1/\bar{\rho}}(x_t)^2]\leq \bar{\rho}(\cM_{1/\bar \rho}(x_{0})-\min \cM_{1/\bar\rho})+  \frac{\bar \rho^2 L^2}{2}\cdot\sum_{t=0}^T\frac{1}{\beta_t(\beta_t - \gamma)}.$$
	Dividing through by $\sum_{t=0}^T \tfrac{\bar \rho - \gamma - \osa}{\beta_t-\gamma}$ and recognizing the left side as
	$\EE[\cC_{1/\bar{\rho}}(x_{t^*})^2]$ completes the proof.
\end{proof}

\section{Set-approximations}
One deficiency 
	of 
Algorithm~\ref{alg:stoc_prox_noret} 
	is 
that even if the models $f_x(\cdot,\xi)$ 
are 
convex, 
the subproblems \eqref{eqn:subprob_to_solve} 
could be
nonconvex, 
since $\cX$ is a nonconvex set. 
In Section~\ref{sec:main_results}, 
	we will generalize 
Algorithm~\ref{alg:stoc_prox_noret} 
	by 
allowing one 
to replace 
$\cX$ 
	in 
the subproblem \eqref{eqn:subprob_to_solve} by a close approximation. 
To this end, we introduce the following definition.

\begin{definition}[Set-approximation]
	{\rm Consider a set $\cX\subset\R^d$ and a collection of sets $\cX_x\subset\R^d$ and functions $\cR_x\colon\cX_x\to\cX$, indexed by points $x\in \R^d$. We say that the collection $\{(\cX_x,\cR_{x})\}_{x\in\cX}$ is a {\em set approximation of $\cX$ with parameters $(R,\accD,r_1,\accR ,r_2)$}  if the following conditions hold:
		\begin{enumerate}
			\item [$(i)$] {\bf (Accuracy)} For every $x \in \cX$, the set $\cX_x$ is $R$-proximally smooth and satisfies
			\begin{equation}\label{eqn:need_this_eqn}
			\dist(y, \cX_{x})  \leq \frac{\accD}{2} \| x- y\|^2 \qquad \forall y \in \cX\cap \BB(x,r_1).
			\end{equation}
			%			\textcolor{red}{Why need this???}
			
			\item[$(ii)$] {\bf (Retraction)} For every $x \in \cX$, the mapping $\cR_x$ satisfies
			$$
			\|y - \cR_x(y) \| \leq \frac{\accR }{2} \|x-y\|^2 \qquad \forall y \in \cX_x \cap \BB(x,r_2).
			$$
		\end{enumerate}
	}
\end{definition}

Thus, assumption $(i)$ 
asserts 
that 
$\cX_x$
is 
an outer approximation 
of 
$\cX$ up to quadratic error, while
assumption $(ii)$
asserts 
that $\cR_x$ 
restores 
feasibility, 
while deviating from the identity map 
by 
at most a quadratic error. 
The conditions 
	are 
symmetric 
	when $\cR_x$ 
		is 
	the projection onto $\cX$.

We next discuss two illustrative examples of set approximations:
tangent space approximations of Riemannian manifolds 
	and 
inner approximations to sublevel sets of nonsmooth functions.

\subsection{Riemannian manifolds}
{\rm
State and prove a theorem.
	As the first example,
	suppose that 
	$\cX\subset\R^d$ 
	is 
	a compact $C^{\infty}$-smooth manifold, 
	with Riemannian metric induced by the Euclidean inner-product. 
	There 
	are
	two natural set-approximations for such sets. 
	First,
	it 
	is 
	well-known that $\cX$ 
	is 
	itself $R$-proximally smooth 
	for 
	some $R>0$. 
	Therefore we 
	may simply use 
	$\cX_x=\cX$ as the set approximation.
	In this case, 
	we 
	may set 
	$\cR_x={\rm Id}$, 
	$r_1=r_2=\infty$, 
	and 
	$\accD=\accR =0$ 
	in assumptions $(i)$ and $(ii)$. 
	Notice that the subproblems 
	solved by 
	Algorithm~\ref{alg:stoc_prox} 
	are 
	typically not convex for any $\beta_t$. 
	Consequently, 
	it may be 
	more convenient 
	to choose 
	the tangent space approximations $\cX_x=x+T_{\cX}(x)$. Then any retraction 
	in 
	the sense 
	of 
	manifold optimization \cite{absil2009optimization} automatically 
	satisfies  
	$(ii)$ 
	for 
	some 
	$\accR $ 
	and 
	$r_2$.  	For example, 
	one 
	may set 
	$\cR_x$ 
	to be 
	the nearest point projection 
	from 
	$x+T_{\cX}(x)$ 
	onto 
	$\cX$. Moreover it is straightforward to see that $(i)$ also holds 
	automatically 
	for 
	some 
	$\accD$ 
	and 
	$r_1$. We provide a quick proof sketch for completeness.
	
\begin{lemma}
If 	$\cX\subset\R^d$ is a compact $C^{\infty}$-smooth manifold, then property $(i)$ holds for the set approximations $\cX_x=x+T_{\cX}(x)$ for some $r_1,\tau_1>0$.
\end{lemma}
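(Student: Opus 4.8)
The plan is to dispatch the proximal-smoothness requirement trivially and then extract the quadratic distance estimate \eqref{eqn:need_this_eqn} directly from the proximal smoothness of $\cX$ itself, bypassing any local parametrization. First, observe that each approximating set $\cX_x=x+T_{\cX}(x)$ is a translate of a linear subspace, hence a closed convex set; the nearest-point projection onto a closed convex set is always single-valued, so $\cX_x$ is $R$-proximally smooth (indeed $R'$-proximally smooth for every $R'>0$). This settles the regularity half of assumption $(i)$ uniformly in $x$, with no smallness restriction.

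For the accuracy estimate, I would first record the elementary identity that the distance from $y$ to the affine subspace $x+T_{\cX}(x)$ equals the length of the normal component of $y-x$. Writing $w:=P_{N_{\cX}(x)}(y-x)$ for the orthogonal projection of $y-x$ onto the normal space $N_{\cX}(x)=T_{\cX}(x)^{\perp}$, we have $\dist(y,\cX_x)=\|w\|$ and, since $w$ is orthogonal to the tangential part of $y-x$, the identity $\langle w, y-x\rangle=\|w\|^2$.

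The key step is then to invoke the uniform normal inequality of Lemma~\ref{lem_basic_prop_man}(\ref{it3:lem}) applied to $\cX$ itself. Here I would use the standard fact, already quoted in the text, that a compact $C^{\infty}$ (indeed $C^2$) submanifold is $R$-proximally smooth for some uniform $R>0$; compactness is what guarantees a single $R$ serves all basepoints. Since $w\in N_{\cX}(x)$, Lemma~\ref{lem_basic_prop_man}(\ref{it3:lem}) yields $\langle w,y-x\rangle\le \frac{\|w\|}{2R}\|y-x\|^2$. Combining this with $\langle w,y-x\rangle=\|w\|^2$ and cancelling a factor of $\|w\|$ gives $\dist(y,\cX_x)=\|w\|\le \frac{1}{2R}\|y-x\|^2$, so $(i)$ holds with $\tau_1=1/R$ and $r_1=\infty$.

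The one point requiring care---the main (if minor) obstacle---is confirming that for a smooth embedded submanifold the Fr\'echet normal cone of \eqref{eqn:frech_norm} coincides with the full linear normal space $T_{\cX}(x)^{\perp}$, so that the entire vector $w$ (and $-w$) legitimately belongs to $N_{\cX}(x)$ and Lemma~\ref{lem_basic_prop_man}(\ref{it3:lem}) is applicable. This is standard for $C^2$ manifolds but should be noted. An alternative, more hands-on route would parametrize $\cX$ locally by a $C^{\infty}$ chart near $x$ and Taylor-expand to second order, projecting off the tangent term; compactness would then supply uniform bounds on the chart derivatives and a uniform radius $r_1$. I prefer the proximal-smoothness argument above, since it produces the sharp constant $\tau_1=1/R$ with no radius restriction and sidesteps the uniformity bookkeeping entirely.
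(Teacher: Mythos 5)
Your proof is correct, and it takes a genuinely different route from the paper's. The paper argues via local parametrization: it uses the smoothness of the nearest-point projection near $\cX$, invokes the inverse function theorem to show the restriction of $\proj_{\cX}$ to the tangent plane is a local diffeomorphism, and Taylor-expands to extract a quadratic error, with compactness supplying uniform constants at the end. You instead decompose $y-x$ into tangential and normal parts, identify $\dist(y,\cX_x)$ with the norm of the normal component $w$, and feed $w\in N_{\cX}(x)=T_{\cX}(x)^{\perp}$ into the uniform normal inequality of Lemma~\ref{lem_basic_prop_man}(\ref{it3:lem}) for the $R$-proximally smooth set $\cX$; the identity $\langle w,y-x\rangle=\|w\|^2$ then lets you cancel $\|w\|$ and conclude. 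Your argument buys an explicit, globally valid constant ($\accD=1/R$, no radius restriction), avoids the inverse function theorem entirely, and pushes all the compactness bookkeeping into the single quoted fact that compact $C^2$-submanifolds are uniformly proximally smooth---a fact the paper already uses elsewhere, so there is no circularity. The paper's route, by contrast, is self-contained modulo smoothness of the projection and generalizes more readily to verifying the retraction property $(ii)$ (which it establishes along the way). Your one flagged caveat---that the Fr\'echet normal cone of \eqref{eqn:frech_norm} coincides with $T_{\cX}(x)^{\perp}$ for an embedded $C^1$ submanifold---is indeed the only point needing a citation, and it is standard.
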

\begin{proof}
Fix a point $\bar x\in \cX$. By compactness, it suffices to verify \eqref{eqn:need_this_eqn} just around $\bar x$ for some $r_1,\tau_1>0$, which may depend on $\bar x$. To this end, there exists a neighborhood $U$ around $\bar x$ such that the projection map $\proj_{\cX}(\cdot)$ is $C^{\infty}$-smooth on $U$, and its derivative at $\bar{x}$ is the orthogonal projection onto the  $T_{\cX}(\bar x)$. The inverse function theorem therefore guarantees that the restriction $\mathcal{P}\colon \cX_{\bar x}\to \cX$ of $\proj_{\cX}(\cdot)$ is a local $C^{\infty}$-diffeomorphism around the origin. Consequently for all $z\in \cX_{\bar x}$ near $\bar x$, we have
\begin{align*}
\mathcal P(z)&=\mathcal P(\bar x)+\proj_{T_{\cX}(\bar x)}(z-\bar x)+O(\|z-\bar x\|^2)=z+O(\|z-\bar x\|^2).
\end{align*}
In other words, we have verified that $\mathcal P$ satisfies the retraction property $(ii)$ on a neighborhood of $\bar x$. Consequently for all $x\in \cX$ near $\bar x$, we may set $z=\mathcal{P}^{-1}(x)$ and deduce
$x=\mathcal P^{-1}(x)+O(\|P^{-1}(x)-P^{-1}(\bar x)\|^2)=\mathcal P^{-1}(x)+O(\|x-\bar x\|^2).$ Therefore $\dist(x,\cX_{\bar x})=O(\|x-\bar x\|^2)$. Hence \eqref{eqn:need_this_eqn} holds around $\bar x$ for some $r_1,\tau_1>0$.	
\end{proof}

%	Beyond projections onto the manifold, 
%	any retraction 
%	in 
%	the sense 
%	of 
%	manifold optimization \cite{absil2009optimization} automatically 
%	satisfies  
%	$(ii)$ 
%	for 
%	some 
%	$\accR $ 
%	and 
%	$r_2$. 
%	For example, 
%	one 
%	may set 
%	$\cR_x$ 
%	to be 
%	the nearest point projection 
%	from 
%	 $x+T_{\cX}(x)$ 
%	onto 
%	$\cX$. 
	Notice that the subproblems
	solved by 
	Algorithm~\ref{alg:stoc_prox} 
	are 
	convex 
	for 
	sufficiently large $\beta_t$
	and 
	are 
	therefore globally solvable.
	In particular, 
	when equipped with the subgradient models 
	in 
	Table~\ref{table:tab}, 
	Algorithm~\ref{alg:stoc_prox} 
	becomes 
	a stochastic Riemannian subgradient method.

	\subsection{Functional constraints}
	In this example, 
		we 
			assume that 
		the constraint set $\cX$ 
	is 
	a sublevel set
	\begin{equation*}%\label{eqn:singleeqn}
	\cX=[g\leq 0],
	\end{equation*}
	where $g\colon\R^d\to\R\cup\{\infty\}$ 
	is 
	a closed function. 
Approximations 
	of 
	the set $\cX$ naturally 
	arise 
	from 
	approximations 
	of 
	the function $g$. 
	Namely 
	suppose 
	that 
	for 
	each 
	$x\in \cX$, 
	we 
	have
	a convex function $g_x\colon\R^d\to\R\cup\{\infty\}$ that 
	is 
	a two-sided model of $g$, 
	meaning 
	$$|g_x(y)-g(y)|\leq \frac{\gamma}{2}\|y-x\|^2\qquad \forall y\in \R^d,$$
	where $\gamma>0$ 
	is 
	a constant independent of $x$.
	Then we 
	may define 
	the set approximations
	$$\cX_x:=\left\{y: g_x(y)+\frac{\gamma}{2}\|y-x\|^2\leq 0\right\}.$$
	Typical examples 
	of 
	such two-sided models
	and 
	the induced set approximations 
	are 
	summarized 
	in 
	Table~\ref{table:tab} and Figure~\ref{fig:inequality}, illustrating 
	that one   
			may also model 
		multiple inequalities $g_i\leq 0$ for $i=1,\ldots,m$ 
			with 
			a single constraint function, namely $g=\max_{i=1,\ldots, m} g_i$. 
	Such approximations 
		have a computational benefit, 
			since they never require us to project onto $\cX$. 
	Indeed, 	
		since the inclusion $\cX_x\subseteq\cX$ 
		clearly holds, 
		the condition $(ii)$ holds 
			with 
		the identity retraction $\cR_x={\rm Id}$.

%
%	
%	
%	Although it may seem that this assumption limits us to sets defined by a single inequality, 
%		one   
%			may in fact model 
%		multiple inequalities $g_i\leq 0$ for $i=1,\ldots,m$ 
%			with 
%			a single constraint function, namely $g=\max_{i=1,\ldots, m} g_i$.  
%	
%	In this setting, 
%	it 
%	is 
%	important to avoid retractions, 
%	since they are often computationally expensive. 
%	We 
%	will now see 
%	how 
%	to form 
%	set-approximations that 
%	are 
%	conveniently contained in $\cX$, thereby avoiding the need for retractions.

%	
%	Approximations 
%	of 
%	the set $\cX$ naturally 
%	appear 
%	as sublevel sets 
%	of 
%	approximations 
%	of 
%	the function $g$. 
%	Namely 
%	suppose 
%	that 
%	for 
%	each 
%	$x\in \cX$, 
%	we 
%	may find 
%	a convex function $g_x\colon\R^d\to\R\cup\{\infty\}$ that 
%	is 
%	a two-sided model of $g$, 
%	meaning 
%	$$|g_x(y)-g(y)|\leq \frac{\gamma}{2}\|y-x\|^2\qquad \forall y\in \R^d,$$
%	where $\gamma>0$ 
%	is 
%	a constant independent of $x$.
%	Then we 
%	may define 
%	the set approximations
%	$$\cX_x:=\left\{y: g_x(y)+\frac{\gamma}{2}\|y-x\|^2\leq 0\right\}.$$
%	Typical examples 
%	of 
%	such two-sided models
%	and 
%	the induced set approximations 
%	are 
%	summarized 
%	in 
%	Table~\ref{table:tab}. 
%	See also Figure~\ref{fig:inequality} for an illustration.

	In the remainder of the section, we will verify condition $(i)$. 
	Our strategy 
		is 
	as follows. 
	First 
	consider 
	a point $x$ with $g(x)\ll 0$. 
	Then intuitively, one
	should be able to fit 
	a small neighborhood $\BB(x,r_1)\cap \cX$ completely inside $\cX_x$. Consequently \eqref{eqn:need_this_eqn} will hold with $\accD=0$. 
	This argument 
	breaks down 
	wherever $g(x)$ 
	is 
	arbitrarily close 
	to 
	zero, 
	since the radius $r_1$ 
	would have to become 
	arbitrarily small. 
	Therefore, one 
	has to consider 
	the boundary behavior separately. 
	To this end, as is standard when working with functional constraints, 
	we 
	will assume 
	a qualification condition \eqref{eqn:MFCQ}, 
	which essentially guarantees that the constraint region $[g\leq r]$ 
	behaves 
	in a Lipschitz manner relative to perturbations of the right side $r\approx 0$. In particular, when the constraint set $\cX$ is cut out by smooth nonlinear inequalities, property~\eqref{eqn:MFCQ} reduces to the classical Mangasarian–Fromovitz Constraint Qualification (MFCQ).

%	{\color{red} Dima: should we say that this is related to MFCQ and cite an appropriate paper?}
	
%	{\color{red} Dima: Should we add the sufficient condition on the models to the theorem? This may be interesting even for smooth functions $f(y)$. Since then the set approximation is a ball and I think the sharp growth condition is easy to check....}

	\begin{theorem}[Positive slope implies $\eqref{eqn:need_this_eqn}$]\label{thm:main_thm_func_constr}
		Suppose that there are constants $\kappa>0$ and $\alpha<0<\beta$ such that the condition holds:
\begin{equation}\label{eqn:MFCQ}
\dist(0, \partial g(x))\geq \frac{1}{\kappa}\qquad \forall x\in [\alpha<g\leq \beta].
\end{equation}
Suppose moreover that $g$ is $L$-Lipschitz continuous on the tube ${U}_{\cX}(\epsilon) \cap \dom(g)$ for some $\epsilon >0$. 	Choose a constant $$\delta<\min\left\{\frac{\epsilon}{3}, \frac{\beta}{2L},\frac{1}{14\gamma\kappa},\frac{L}{\gamma}\left(\sqrt{1-\frac{\alpha\gamma}{2L^2}}-1\right)\right\}.$$
Then the estimate $\eqref{eqn:need_this_eqn}$ holds with  
$\accD=2\gamma \kappa$ and $r_1=\min\left\{\frac{\delta}{2},\sqrt{\frac{\beta-2L\delta}{\gamma}},\sqrt{\frac{\delta}{8\kappa\gamma}} \right\}.$
	\end{theorem}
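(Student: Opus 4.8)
The plan is to recast the set approximation $\cX_x$ in terms of sublevel sets of $g$ and then invoke the metric-regularity consequence of the qualification condition \eqref{eqn:MFCQ}. Writing $h_x:=g_x+\frac{\gamma}{2}\norm{\cdot-x}^2$, the two-sided model bound $|g_x-g|\le\frac{\gamma}{2}\norm{\cdot-x}^2$ immediately yields the sandwich $g\le h_x\le g+\gamma\norm{\cdot-x}^2$ on all of $\R^d$. Since $\cX_x=[h_x\le 0]$, this produces the inclusions $[\,g+\gamma\norm{\cdot-x}^2\le 0\,]\subseteq\cX_x\subseteq[g\le 0]=\cX$. The right inclusion confirms that the identity retraction is admissible, and, more usefully for \eqref{eqn:need_this_eqn}, it suffices to produce, for each $y\in\cX\cap\BB(x,r_1)$, a nearby point $w$ with $g(w)+\gamma\norm{w-x}^2\le 0$ and $\norm{w-y}\le\gamma\kappa\norm{y-x}^2=\frac{\accD}{2}\norm{y-x}^2$.

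First I would dispose of the interior case. If the basepoint value $g(x)$ is sufficiently negative---quantitatively, below a multiple of $L r_1+\gamma r_1^2$ dictated by the $L$-Lipschitz continuity of $g$ on the tube $U_{\cX}(\epsilon)$---then every $y\in\BB(x,r_1)\cap\cX$ already satisfies $g(y)+\gamma\norm{y-x}^2\le 0$, so $y\in\cX_x$ and \eqref{eqn:need_this_eqn} holds trivially with constant $0$. This is precisely the regime flagged in the discussion preceding the theorem; the conditions $r_1\le\sqrt{(\beta-2L\delta)/\gamma}$ and $\delta<\beta/(2L)$ make this thresholding precise and keep the relevant values $g(y)$ below $\beta$. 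The substance is the \emph{boundary case}, where $g(x)$ is close to $0$, so that $y$ may have $g(y)+\gamma\norm{y-x}^2>0$. Here the choice of $r_1$ guarantees $g(y)\in(-\gamma r_1^2,0]\subseteq(\alpha,\beta]$ and, crucially, that a whole neighborhood of the relevant points remains inside the region $[\alpha<g\le\beta]$ on which \eqref{eqn:MFCQ} supplies the uniform slope bound $\dist(0,\partial g)\ge 1/\kappa$.

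Combined with weak convexity of $g$ (through the subgradient inequality \eqref{eqn:subgrad_ineq}), this slope bound yields a local error-bound estimate for the level sets of $g$: moving from $y$ in a descent direction decreases $g$ at rate at least $1/\kappa$ per unit length while one stays in the valid region, and hence reaches the set where $g(w)+\gamma\norm{w-x}^2\le 0$ after traveling distance at most $\kappa\cdot\bigl(g(y)+\gamma\norm{y-x}^2\bigr)_+\le\kappa\gamma\norm{y-x}^2$. Since $g+\gamma\norm{\cdot-x}^2$ is convex---indeed $\gamma$-strongly convex, as $g$ is $\gamma$-weakly convex---the associated projection is well behaved, and the supporting subgradient at the boundary inherits norm at least $1/\kappa$ from the qualification, which is what converts the value gap into the distance bound with the advertised constant $\accD=2\gamma\kappa$.

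The hard part is this last conversion. The obstacle is that decreasing $g$ pushes the iterate away from the basepoint $x$, whereas the constraint defining $\cX_x$ is centered at $x$ and tightens quadratically in $\norm{\cdot-x}$; reconciling these so that the final constant is exactly $2\gamma\kappa$, rather than $2\gamma\kappa\cdot(1+O(r_1))$, forces one to control the first-order error terms carefully. This is the role of the remaining, seemingly opaque, bounds on $\delta$: the term $\tfrac{1}{14\gamma\kappa}$ controls the slope/approximation error so that the effective slope stays bounded below by a fixed fraction of $1/\kappa$, the term $\tfrac{L}{\gamma}\bigl(\sqrt{1-\alpha\gamma/(2L^2)}-1\bigr)$ prevents the descent from driving $g$ below $\alpha$, and $r_1\le\sqrt{\delta/(8\kappa\gamma)}$ together with $r_1\le\delta/2$ keep every auxiliary point both near $x$ and inside the Lipschitz tube $U_{\cX}(\epsilon)$. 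I expect that verifying that these choices simultaneously keep all points in $[\alpha<g\le\beta]\cap U_{\cX}(\epsilon)$ and collapse the accumulated lower-order errors into the clean constant $2\gamma\kappa$ will be the most delicate and computation-heavy portion of the argument.
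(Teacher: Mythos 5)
Your proposal is correct in outline and matches the paper's skeleton---the same two-case split (a basepoint with $g(\bar x)$ sufficiently negative, handled by showing $\BB(\bar x,r_1)\cap\cX\subseteq\cX_{\bar x}$ directly; a near-boundary basepoint, handled by a slope-based error bound)---but it differs genuinely in the technical core of the boundary case. The paper establishes the error bound for the model function $G_{\bar x}=g_{\bar x}+\frac{\gamma}{2}\|\cdot-\bar x\|^2$ itself, and to lower-bound $\dist(0,\partial G_{\bar x})$ it must compare the subdifferentials of $g$ and of its model $g_{\bar x}$; this is done by importing a nontrivial slope-comparison theorem (Lemma~\ref{lem:subgrad_closeness}, from \cite[Theorem 6.1]{davis2017nonsmooth}), which produces the nearby point $y_2$ and the various $7\gamma\delta$-type error terms. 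You instead discard the model entirely via the inner inclusion $[\,g+\gamma\|\cdot-x\|^2\leq 0\,]\subseteq\cX_x$ and run the error bound on $\phi:=g+\gamma\|\cdot-x\|^2$, whose subdifferential is \emph{exactly} $\partial g+2\gamma(\cdot-x)$ by the sum rule with a smooth quadratic; the slope transfer then costs only $2\gamma\|\cdot-x\|=O(\gamma\delta)$, which the bound $\delta<\frac{1}{14\gamma\kappa}$ absorbs, yielding $\dist(0,\partial\phi)\geq\frac{1}{2\kappa}$ and hence the same constant $2\gamma\kappa$. Your route is more elementary (no external comparison theorem) but leans on the specific sandwich structure of $\cX_x$; the paper's route is heavier but controls the actual defining function of $\cX_x$. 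Two caveats: first, your heuristic ``travel in a descent direction at rate $1/\kappa$'' is not itself a proof for nonsmooth closed $g$---you still need Ioffe's descent lemma (the paper's Lemma~\ref{lem:ioffe_desc} and Corollary~\ref{cor:error_on_ball}) to convert the slope lower bound on a shell $[\alpha'<\phi\leq\beta']\cap\BB$ into the error bound, and you must verify that this shell sits inside $[\alpha<g\leq\beta]\cap U_{\cX}(\epsilon)$, which is exactly where the remaining constraints on $\delta$ and $r_1$ enter. Second, the target constant is not as tight as you fear: the factor-of-two slack from degrading $1/\kappa$ to $1/(2\kappa)$ is already built into $\accD=2\gamma\kappa$, so no delicate cancellation of $O(r_1)$ terms is required.
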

	
	The following section develops a proof of Theorem~\ref{thm:main_thm_func_constr}.	The argument uses standard variational analytic techniques based on error bounds, but is somewhat technical; the reader can safely skip it upon first reading.

	\subsubsection{Proof of Theorem~\ref{thm:main_thm_func_constr}}\label{sec:app_proof_level}
	Henceforth, let $r^+:=\max\{0,r\}$ denote the positive part of any real $r\in \R$. For each $x\in \R^d$, we define the function $$G_{ x}(y):=g_{ x}(y)+\frac{\gamma}{2}\|y- x\|^2.$$ Note the equality $\cX_x=[G_x\leq 0]$.	
	We begin with the following key lemma, which gives a minimal sufficient condition for establishing~\eqref{eqn:need_this_eqn}. It shows that a local error bound property, which asserts a Lipschitz-like behavior of sublevel sets $[G_{\bar x}(\cdot)\leq r]$ with respect to $r$, implies a local estimate \eqref{eqn:need_this_eqn} around a base point.

	\begin{lemma}[Error bound for models implies \eqref{eqn:need_this_eqn}]\label{lem:mod_error_give_result}
			Fix a point ${\bar x}\in \cX$ and suppose that there exist $\kappa,r,\epsilon>0$ such that the local error bound holds:
		\begin{equation}\label{eqn:loc_errodir}
		\dist(y,[G_{\bar x}\leq 0])\leq \kappa\cdot [G_{\bar x}(y)]^+,\qquad \forall y\in B({\bar x},\epsilon)\cap [G_{\bar x}\leq r].
		\end{equation}
		Then the inequality holds:
		$$\dist(y,\cX_{\bar x})\leq \gamma\kappa\cdot \|y- {\bar x}\|^2 \qquad \forall y\in \cX\cap \BB\left(\bar x, \delta\right),$$
		where we define the radius $\delta:= \min\{\epsilon,\sqrt{ r/\gamma}\}$.
	\end{lemma}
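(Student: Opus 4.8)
The plan is to reduce the desired quadratic distance bound to the assumed local error bound \eqref{eqn:loc_errodir} by controlling the positive part $[G_{\bar x}(y)]^+$ for feasible points $y$ near $\bar x$. The key observation is that membership in $\cX$ controls $G_{\bar x}$ from above through the upper half of the two-sided model property.

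First I would bound $G_{\bar x}(y)$ for $y \in \cX$. Recalling the definition $G_{\bar x}(y) = g_{\bar x}(y) + \frac{\gamma}{2}\|y - \bar x\|^2$ and the two-sided model inequality $g_{\bar x}(y) \le g(y) + \frac{\gamma}{2}\|y - \bar x\|^2$, I would obtain
\[
G_{\bar x}(y) \le g(y) + \gamma\|y - \bar x\|^2.
\]
Since $y \in \cX = [g \le 0]$ forces $g(y) \le 0$, this yields $G_{\bar x}(y) \le \gamma\|y - \bar x\|^2$, and hence the positive part satisfies $[G_{\bar x}(y)]^+ \le \gamma\|y - \bar x\|^2$.

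Next I would verify that every $y \in \cX \cap \BB(\bar x, \delta)$ satisfies the two hypotheses needed to invoke \eqref{eqn:loc_errodir}. The inclusion $y \in B(\bar x, \epsilon)$ is immediate from $\delta \le \epsilon$. For the sublevel membership $y \in [G_{\bar x} \le r]$, I would combine the bound just derived with $\|y - \bar x\| \le \delta$ to get $G_{\bar x}(y) \le \gamma\delta^2 \le r$, where the final inequality uses $\delta \le \sqrt{r/\gamma}$. With both hypotheses in place, applying the error bound and the estimate on the positive part gives
\[
\dist(y, \cX_{\bar x}) = \dist(y, [G_{\bar x} \le 0]) \le \kappa\,[G_{\bar x}(y)]^+ \le \gamma\kappa\|y - \bar x\|^2,
\]
which is exactly the claimed inequality.

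The argument is short, and the only delicate point is the bookkeeping role of $\delta$: it is chosen precisely so that feasible points within radius $\delta$ of $\bar x$ remain inside the region $B(\bar x, \epsilon) \cap [G_{\bar x} \le r]$ on which the error bound is assumed to hold. I do not expect any substantial obstacle in this lemma itself; the genuine difficulty of the overall development lies in \emph{establishing} the local error bound \eqref{eqn:loc_errodir}, which the present statement takes as a hypothesis and which will presumably be supplied from the qualification condition \eqref{eqn:MFCQ} in the remainder of the proof of Theorem~\ref{thm:main_thm_func_constr}.
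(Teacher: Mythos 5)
Your argument is correct and is essentially identical to the paper's own proof: both bound $G_{\bar x}(y)\le g(y)+\gamma\|y-\bar x\|^2\le \gamma\|y-\bar x\|^2$ via the two-sided model property and $g(y)\le 0$, verify that $y$ lies in $\BB(\bar x,\epsilon)\cap[G_{\bar x}\le r]$ using the definition of $\delta$, and then invoke the assumed error bound. No issues.
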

	\begin{proof}
	Consider a point $y\in \cX\cap\mathbb{B}(\bar x,\delta)$. Observe $$G_{\bar x}(y)=g_{\bar x}(y)+\frac{\gamma}{2}\|y-\bar x\|^2\leq g(y)+\gamma\|y-\bar x\|^2\leq \gamma\delta^2\leq r.$$
	Thus $y$ lies in $\mathbb{B}(\bar x,\epsilon)\cap [G_{\bar x}\leq r].$ Appealing to the assumption \eqref{eqn:loc_errodir}, we conclude 
	$$\dist(y,\cX_{\bar x})=\dist(y,[G_{\bar x}\leq 0])\leq \kappa \cdot [G_{\bar x}(y)]^{+}\leq \gamma\kappa\|y-\bar x\|^2.$$
This completes the proof.
	\end{proof}
	
	While it shows that sharp growth of the model function $G_{\bar x}$ implies~\eqref{eqn:need_this_eqn}, the deficiency of Lemma~\ref{lem:mod_error_give_result} is that the key error bound assumption \eqref{eqn:loc_errodir} is stated in terms of the models $G_{\bar x}$ and not in terms of original function $g$. Therefore, our goal is now to develop conditions on $g$ itself that ensure \eqref{eqn:loc_errodir}. To do this, we follow a two step argument. First, we show~\eqref{eqn:loc_errodir} follows whenever the subgradients of $G_{\bar x}$ have sufficiently large norm in a small shell outside $\cX_x$, a condition akin to~\eqref{eqn:MFCQ}. Then we show that~\eqref{eqn:MFCQ} implies the aforementioned condition on the subgradients of $G_{\bar x}$.

	Turning to the the first step,  we must lower bound $\dist(0, \partial G_{\bar x}(y))$ for $y$ near $\bar x$. This quantity in turn may equivalently be stated in terms of the $\emph{slope}$: for any function $h\colon\R^d\to\R\cup\{\infty\}$ and any point $x\in \dom h$, we define the slope
	$$|\nabla h|(x):=\limsup_{y\to x}\frac{(h(x)-h(y))^+}{\|x-y\|}.$$
	To see the relation, observe that if $h$ is weakly convex, then the slope reduces to
	$$|\nabla h|(x)=\dist(0,\partial h(x)),$$
	since directional derivative of $h$ is the support function of the subdifferential $\partial h$.
	The slope is well-known to be closely related with the error bound property. For example, the following lemma provides a slope-based sufficient condition for an error bound to hold at a single point.

	\begin{lemma}[Slope and one-point error bound{\cite[Basic Lemma, Ch. 1]{ioffe_survey}}]\label{lem:ioffe_desc}
	Consider a closed function $h\colon\R^d\to\R\cup\{\infty\}$. Fix a point $\bar x\in \dom h$ and assume there are constants $\alpha, \kappa, \epsilon>0$ satisfying $0\leq h(\bar x)-\alpha<\epsilon/\kappa$ and 
	such that 
	$$|\nabla h|(x)\geq \frac{1}{\kappa}\qquad \textrm{for all }x\in \mathbb{B}(\bar x,\epsilon)\cap [\alpha<h\leq h(\bar x)].$$ 
	Then the estimate, $\dist(\bar x,[h\leq \alpha])\leq \kappa(h(\bar x)-\alpha)$, holds.
	\end{lemma}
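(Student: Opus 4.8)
The plan is to deduce this error bound from Ekeland's variational principle, following the standard route for slope-based error bounds. Write $s:=h(\bar x)-\alpha$; if $s=0$ then $\bar x\in[h\le\alpha]$ and the estimate holds trivially, so I may assume $s>0$. The central idea is to apply Ekeland's principle to the truncated function $g:=(h-\alpha)^+=\max\{h-\alpha,0\}$, which is closed (the composition of the lower semicontinuous $h$ with a continuous nondecreasing map), nonnegative, and satisfies $g(\bar x)=s$. Since $\inf g\ge 0$, the point $\bar x$ is automatically an $s$-approximate minimizer of $g$, which is exactly the input Ekeland's principle requires.

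Next I would invoke Ekeland's variational principle with tolerance $s$ and a radius parameter $\lambda$, producing a point $y$ satisfying $g(y)\le s$, $\|y-\bar x\|\le\lambda$, and the strict minimality $g(z)>g(y)-\tfrac{s}{\lambda}\|z-y\|$ for all $z\ne y$. This last property bounds the slope: directly from the definition one reads off $|\nabla g|(y)\le s/\lambda$. I would then argue by contradiction that $g(y)=0$. Suppose instead $g(y)>0$, i.e.\ $h(y)>\alpha$. Lower semicontinuity of $h$ forces $h>\alpha$ on a neighborhood of $y$, so $g$ and $h-\alpha$ agree there and hence $|\nabla h|(y)=|\nabla g|(y)$. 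The inequalities $g(y)\le s$ and $\|y-\bar x\|\le\lambda$ place $y$ inside $\mathbb{B}(\bar x,\epsilon)\cap[\alpha<h\le h(\bar x)]$, where the hypothesis guarantees $|\nabla h|(y)\ge 1/\kappa$. Comparing the two slope estimates then yields a contradiction, so $g(y)=0$, giving $y\in[h\le\alpha]$ and therefore $\dist(\bar x,[h\le\alpha])\le\|y-\bar x\|\le\lambda$.

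The delicate point, and the main obstacle, is that the slope hypothesis is non-strict ($|\nabla h|\ge 1/\kappa$), whereas at the natural scaling $\lambda=\kappa s$ Ekeland delivers only $|\nabla g|(y)\le s/\lambda=1/\kappa$, so the two bounds merely meet rather than conflict. I would resolve this using exactly the slack built into the hypothesis $s<\epsilon/\kappa$, i.e.\ $\kappa s<\epsilon$: choose $\lambda$ slightly larger than $\kappa s$ but still $\lambda\le\epsilon$. This makes the Ekeland slope bound strict, $|\nabla g|(y)\le s/\lambda<1/\kappa$, producing a genuine contradiction, while keeping $y\in\mathbb{B}(\bar x,\epsilon)$ so the slope hypothesis still applies at $y$. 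The resulting distance bound is $\dist(\bar x,[h\le\alpha])\le\lambda$ for every admissible $\lambda\in(\kappa s,\epsilon]$; taking the infimum over such $\lambda$ gives the claimed estimate $\dist(\bar x,[h\le\alpha])\le\kappa(h(\bar x)-\alpha)$. The only remaining care is the routine identification $|\nabla g|(y)=|\nabla h|(y)$ and the slope reading off Ekeland's strict minimality, both immediate from the definition of the slope together with lower semicontinuity of $h$.
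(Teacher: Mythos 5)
Your proof is correct. Note that the paper itself offers no proof of this lemma: it is imported verbatim from Ioffe's survey (the ``Basic Lemma'' of Chapter 1), so there is nothing internal to compare against. Your Ekeland-based argument is the standard derivation of that cited result, and every step checks out: the truncation $g=(h-\alpha)^+$ is lower semicontinuous and nonnegative with $g(\bar x)=s$, so $\bar x$ is an $s$-approximate minimizer; Ekeland's principle with radius $\lambda$ yields $y$ with $g(y)\le s$, $\|y-\bar x\|\le\lambda$, and $|\nabla g|(y)\le s/\lambda$; if $g(y)>0$ then lower semicontinuity gives $h>\alpha$ near $y$, so $g$ and $h-\alpha$ coincide locally and their slopes agree, while $g(y)\le s$ and $\lambda\le\epsilon$ place $y$ in $\mathbb{B}(\bar x,\epsilon)\cap[\alpha<h\le h(\bar x)]$ where the hypothesis forces $|\nabla h|(y)\ge 1/\kappa$. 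You correctly identified the one delicate point—that at $\lambda=\kappa s$ the two slope bounds only meet—and resolved it properly by exploiting the strict inequality $\kappa s<\epsilon$ to take $\lambda\in(\kappa s,\epsilon]$ and then pass to the infimum. This is a complete and correct proof of the cited lemma.
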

	
	From this one-point error bound, we can now easily establish a slope-based sufficient condition for an error bound to hold on a ball. We provide a short proof for completeness.

	\begin{corollary}[Slope and a local error bound]\label{cor:error_on_ball}
	Consider a closed function $h\colon\R^d\to\R\cup\{\infty\}$ and fix a point $\bar x\in \dom h$. Assume there are constants $\alpha,\beta, \kappa, \epsilon>0$ satisfying $0<\beta-\alpha<\frac{\epsilon}{2\kappa}$ and 
	such that 
	\begin{equation}\label{eqn:second_try}
	|\nabla h|(x)\geq \frac{1}{\kappa}\qquad \textrm{for all }x\in \mathbb{B}(\bar x,\epsilon)\cap [\alpha<h\leq \beta].
	\end{equation}
	Then the estimate
	$$\dist( x,[h\leq \alpha])\leq \kappa(h( x)-\alpha)^+\qquad \textrm{holds for all }x\in \mathbb{B}(\bar x,\epsilon/2)\cap [h\leq \beta].$$
\end{corollary}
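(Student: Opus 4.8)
The plan is to reduce the ball-wide error bound to the one-point error bound of Lemma~\ref{lem:ioffe_desc}, applied separately at each point $x\in\BB(\bar x,\epsilon/2)\cap[h\leq\beta]$ but with the radius halved to $\epsilon/2$. First I would dispose of the trivial case: if $h(x)\leq\alpha$, then $x\in[h\leq\alpha]$, so $\dist(x,[h\leq\alpha])=0$, and the claimed inequality holds since its right-hand side $\kappa(h(x)-\alpha)^+$ is nonnegative. The substance is therefore the case $\alpha<h(x)\leq\beta$.

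In that case I would invoke Lemma~\ref{lem:ioffe_desc} with base point $x$ in place of $\bar x$, the same constants $\alpha$ and $\kappa$, and radius $\epsilon/2$ in place of $\epsilon$. Its two hypotheses must be checked. The gap condition $0\leq h(x)-\alpha<\frac{\epsilon/2}{\kappa}$ is immediate: nonnegativity follows from $h(x)>\alpha$, while the strict upper bound is exactly $h(x)-\alpha\leq\beta-\alpha<\frac{\epsilon}{2\kappa}$, using the standing assumption $\beta-\alpha<\frac{\epsilon}{2\kappa}$.

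The slope condition requires $|\nabla h|(z)\geq 1/\kappa$ for every $z\in\BB(x,\epsilon/2)\cap[\alpha<h\leq h(x)]$. I would obtain this from the inclusion
$$
\BB(x,\epsilon/2)\cap[\alpha<h\leq h(x)]\ \subseteq\ \BB(\bar x,\epsilon)\cap[\alpha<h\leq\beta],
$$
which lets me quote hypothesis~\eqref{eqn:second_try} directly. The level-set containment is immediate from $h(x)\leq\beta$, and the ball containment is the triangle inequality: any $z$ with $\|z-x\|\leq\epsilon/2$ satisfies $\|z-\bar x\|\leq\|z-x\|+\|x-\bar x\|\leq\epsilon/2+\epsilon/2=\epsilon$, using $x\in\BB(\bar x,\epsilon/2)$. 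With both hypotheses verified, Lemma~\ref{lem:ioffe_desc} yields $\dist(x,[h\leq\alpha])\leq\kappa(h(x)-\alpha)$, and since $h(x)>\alpha$ the right-hand side equals $\kappa(h(x)-\alpha)^+$, completing the argument.

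As for difficulty, there is essentially no obstacle here: the corollary is a routine globalization of the one-point lemma. The only point requiring care is the bookkeeping on the radius—one must halve it so that the triangle inequality keeps the relevant ball inside $\BB(\bar x,\epsilon)$, and the factor $\tfrac12$ in the hypothesis $\beta-\alpha<\frac{\epsilon}{2\kappa}$ is precisely what makes the gap condition of Lemma~\ref{lem:ioffe_desc} hold uniformly over the halved ball.
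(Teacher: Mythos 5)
Your proposal is correct and follows essentially the same route as the paper: dispose of the trivial case $h(x)\leq\alpha$, then apply Lemma~\ref{lem:ioffe_desc} at each point with the radius halved to $\epsilon/2$, verifying the gap condition from $\beta-\alpha<\frac{\epsilon}{2\kappa}$ and the slope condition via the triangle-inequality inclusion into $\BB(\bar x,\epsilon)\cap[\alpha<h\leq\beta]$. No gaps.
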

	\begin{proof}
	Observe first that for any $ x\in [h\leq \alpha]$, the estimate $\dist( x,[h\leq \alpha])\leq \kappa(h( x)-\alpha)^+$ holds trivially.
	Therefore, we may only focus on an arbitrary point $y\in \mathbb{B}(\bar x,\epsilon/2)\cap [\alpha<h\leq \beta]$. Let us verify the assumptions of Lemma~\ref{lem:ioffe_desc} with $\bar x$ replaced by $y$ and $\epsilon$ replaced by $\epsilon/2$. To this end, consider an arbitrary point $x\in \mathbb{B}(y,\epsilon/2)\cap[\alpha<h\leq h(y)]$. Then clearly $x$ lies in $\mathbb{B}(\bar x,\epsilon)\cap [\alpha<h\leq \beta]$, and therefore $|\nabla h|(x)\geq \frac{1}{\kappa}$ holds by \eqref{eqn:second_try}. Moreover, observe $0<h(y)-\alpha\leq\beta-\alpha<\frac{\epsilon/2}{\kappa}$.
	 An application of Lemma~\ref{lem:ioffe_desc} therefore guarantees $\dist(y,[h\leq \alpha])\leq \kappa(h(y)-\alpha)^+$. The proof is complete.
	\end{proof}

	Taking Corollary~\ref{cor:error_on_ball}, together with Lemma~\ref{lem:mod_error_give_result} yields sufficient conditions for \eqref{eqn:need_this_eqn} based on the subdifferential of $G_{\bar x}(y)$.

	\begin{corollary}[Nondegenerate slope for the models implies \eqref{eqn:need_this_eqn}]\label{cor:nondeg_slop_model}
				Fix a point ${\bar x}\in \cX$. Assume there are constants $r, \kappa, \epsilon>0$ satisfying $0<r<\frac{\epsilon}{2\kappa}$ and 
				such that 
				\begin{equation}\label{eqn:get_it}
				|\nabla G_{\bar x}|(y)\geq \frac{1}{\kappa}\qquad \textrm{for all }y\in \mathbb{B}(\bar x,\epsilon)\cap [0<G_{\bar x}\leq r].
				\end{equation}
						Then the inequality holds:
				$$\dist(y,\cX_{\bar x})\leq \gamma\kappa\cdot \|y- {\bar x}\|^2 \qquad \forall y\in \cX\cap \BB\left(\bar x, \nu\right),$$
				where we define the radius $\nu:= \min\{\epsilon/2,\sqrt{ r/\gamma}\}$.
	\end{corollary}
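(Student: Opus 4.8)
The plan is to derive the local error bound \eqref{eqn:loc_errodir} for the model function $G_{\bar x}$ from the slope hypothesis \eqref{eqn:get_it}, and then invoke Lemma~\ref{lem:mod_error_give_result}. The bridge between the slope condition and the error bound is precisely Corollary~\ref{cor:error_on_ball}, applied with $h=G_{\bar x}$; the only genuinely new work is transferring the error bound from the strict sublevel set $[G_{\bar x}\leq\alpha]$ down to the target set $\cX_{\bar x}=[G_{\bar x}\leq 0]$.

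First I would apply Corollary~\ref{cor:error_on_ball} to $h=G_{\bar x}$ with $\beta=r$ and an arbitrary fixed $\alpha\in(0,r)$. The qualification $0<\beta-\alpha<\tfrac{\epsilon}{2\kappa}$ holds because $r-\alpha<r<\tfrac{\epsilon}{2\kappa}$ by assumption, and the slope bound \eqref{eqn:second_try} is inherited directly from \eqref{eqn:get_it} since $[\alpha<G_{\bar x}\leq r]\subseteq[0<G_{\bar x}\leq r]$. The corollary then yields $\dist(x,[G_{\bar x}\leq\alpha])\leq\kappa\,(G_{\bar x}(x)-\alpha)^+$ for every $x\in\mathbb{B}(\bar x,\epsilon/2)\cap[G_{\bar x}\leq r]$.

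The key remaining step is to let $\alpha\downarrow 0$ and recover the error bound relative to $[G_{\bar x}\leq 0]=\cX_{\bar x}$. Here I would use that $G_{\bar x}$ is closed (in fact convex, being the sum of the convex function $g_{\bar x}$ and a quadratic), so the sublevel sets $[G_{\bar x}\leq\alpha]$ decrease to $[G_{\bar x}\leq 0]$ as $\alpha\downarrow 0$. This intersection is nonempty: since $g_{\bar x}(\bar x)=g(\bar x)\leq 0$, we have $G_{\bar x}(\bar x)=g(\bar x)\leq 0$, so $\bar x\in\cX_{\bar x}$. A standard argument for nested closed sets then gives $\dist(x,[G_{\bar x}\leq\alpha])\uparrow\dist(x,[G_{\bar x}\leq 0])$, while the right-hand side $\kappa\,(G_{\bar x}(x)-\alpha)^+$ converges to $\kappa\,[G_{\bar x}(x)]^+$. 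Passing to the limit produces exactly the hypothesis \eqref{eqn:loc_errodir} of Lemma~\ref{lem:mod_error_give_result}, with $\epsilon$ replaced by $\epsilon/2$.

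Finally I would invoke Lemma~\ref{lem:mod_error_give_result} with this error bound (using radius $\epsilon/2$), which delivers $\dist(y,\cX_{\bar x})\leq\gamma\kappa\,\|y-\bar x\|^2$ for all $y\in\cX\cap\mathbb{B}(\bar x,\nu)$ with $\nu=\min\{\epsilon/2,\sqrt{r/\gamma}\}$, as claimed. I expect the only delicate point to be the passage to the limit $\alpha\downarrow 0$: Corollary~\ref{cor:error_on_ball} genuinely requires $\alpha>0$, so the right-continuity of $\alpha\mapsto\dist(x,[G_{\bar x}\leq\alpha])$, justified via monotone convergence of distances to the nested closed sublevel sets, is the one estimate that needs care; everything else is bookkeeping of the constants.
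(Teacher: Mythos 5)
Your proposal is correct and follows the same route the paper intends: it combines Corollary~\ref{cor:error_on_ball} (applied to $h=G_{\bar x}$ with $\beta=r$) with Lemma~\ref{lem:mod_error_give_result}, exactly as stated in the text preceding the corollary. The only addition is your limiting argument $\alpha\downarrow 0$ to respect the literal hypothesis $\alpha>0$ of Corollary~\ref{cor:error_on_ball}; this is sound (and carefully justified via the nested closed sublevel sets), though one could equally observe that the proofs of Lemma~\ref{lem:ioffe_desc} and Corollary~\ref{cor:error_on_ball} go through verbatim at $\alpha=0$, which is how the paper implicitly proceeds.
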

%	\begin{proof}
%			Then the estimate
%		$$\dist( x,[G_{\bar x}\leq 0])\leq \kappa(G_{\bar x}( x))^+\qquad \textrm{holds for all }x\in \mathbb{B}(\bar x,\epsilon/2)\cap [G_{\bar x}\leq \beta].$$
%		
%		\end{proof}

	Turning to the second step of our argument, we now determine conditions on the slope of $g$ that imply the lower bound \eqref{eqn:get_it} on the slope  for the model $G_{\bar x}(\cdot)$. To this end, we will require the following result (a special case of {\cite[Theorem 6.1]{davis2017nonsmooth}}) that compares the subdifferentials of $g$ and $G_{\bar x}$.

	\begin{lemma}[Slope comparison{\cite[Theorem 6.1]{davis2017nonsmooth}}]\label{lem:subgrad_closeness}
	Fix a point $\bar x\in \R^d$ and an arbitrary constant $\lambda>0$. Then for every point $y_1\in\R^d$ and subgradient $v_1\in\partial G_{\bar x}(y_1)$, there exists another point $y_2\in\R^d$ satisfying 
	$$\|y_1-y_2\|\leq 2\lambda\qquad\textrm{and}\qquad\dist(v_1,\partial g(y_2))\leq \frac{\gamma\|y_1-\bar x\|^2}{\lambda}+2\gamma\|y_2-\bar x\|.$$
%	and
%	$$g_2(y_2)\leq g_x(y_1)+\|v_1\|\|y_2-y_1\|+\frac{\gamma}{2}\|y_2-x\|^2$$
	\end{lemma}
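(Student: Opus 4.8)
The plan is to apply Ekeland's variational principle to a suitably perturbed version of $g$, exploiting the fact that the two-sided model property forces $G_{\bar x}$ to majorize $g$ up to a controlled quadratic gap. Concretely, I would set $\Delta(y) := G_{\bar x}(y) - g(y)$. Adding $\frac{\gamma}{2}\|y-\bar x\|^2$ to the model bound $|g_{\bar x}(y)-g(y)|\le\frac{\gamma}{2}\|y-\bar x\|^2$ yields the two-sided estimate $0\le \Delta(y)\le \gamma\|y-\bar x\|^2$ valid for all $y$. Since $g_{\bar x}$ is convex and $\frac{\gamma}{2}\|\cdot-\bar x\|^2$ is convex, the function $G_{\bar x}$ is convex, so the inclusion $v_1\in\partial G_{\bar x}(y_1)$ furnishes the global subgradient inequality $G_{\bar x}(y)\ge G_{\bar x}(y_1)+\langle v_1,y-y_1\rangle$ for all $y$.

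Next I would introduce the perturbed function
$$\phi(y):= g(y)-\langle v_1,y-y_1\rangle+\gamma\|y-\bar x\|^2$$
and show that $y_1$ is a near-minimizer of it. Writing $g(y)+\gamma\|y-\bar x\|^2 = G_{\bar x}(y)+\left(\gamma\|y-\bar x\|^2-\Delta(y)\right)\ge G_{\bar x}(y)$ and combining with the subgradient inequality gives $\phi(y)\ge G_{\bar x}(y)-\langle v_1,y-y_1\rangle\ge G_{\bar x}(y_1)\ge g(y_1)$ for every $y$, so $\phi$ is proper, lower semicontinuous, and bounded below. Because $\phi(y_1)=g(y_1)+\gamma\|y_1-\bar x\|^2$ while $\inf\phi\ge g(y_1)$, the point $y_1$ is an $\epsilon$-minimizer of $\phi$ with $\epsilon:=\gamma\|y_1-\bar x\|^2$.

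Then I would invoke Ekeland's variational principle with parameter $\lambda$: there exists $y_2$ with $\|y_2-y_1\|\le 2\lambda$ such that $y_2$ globally minimizes $y\mapsto \phi(y)+\frac{\epsilon}{\lambda}\|y-y_2\|$. Reading off first-order optimality and applying the exact sum rule for the $C^1$ perturbation $y\mapsto -\langle v_1,y-y_1\rangle+\gamma\|y-\bar x\|^2$, whose gradient at $y_2$ is $-v_1+2\gamma(y_2-\bar x)$, the minimality yields $\dist\left(0,\partial g(y_2)-v_1+2\gamma(y_2-\bar x)\right)\le\frac{\epsilon}{\lambda}$. Hence there is $w\in\partial g(y_2)$ with $\|w-v_1+2\gamma(y_2-\bar x)\|\le\frac{\gamma\|y_1-\bar x\|^2}{\lambda}$, and the triangle inequality produces $\dist(v_1,\partial g(y_2))\le\frac{\gamma\|y_1-\bar x\|^2}{\lambda}+2\gamma\|y_2-\bar x\|$, exactly as claimed.

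The main obstacle is the final subdifferential step: converting the approximate minimality of $y_2$, an inequality carrying an order-$\|y-y_2\|$ penalty $\frac{\epsilon}{\lambda}\|y-y_2\|$, into a genuine distance bound for the (regular) subdifferential of the possibly nonconvex, nonsmooth $g$. This is where one must use the standard characterization linking Ekeland-type stationarity to the subdifferential, equivalently the strong slope, together with the exact sum rule for adding a smooth function; carefully tracking the constant through the chosen formulation of Ekeland's principle is what produces the factor in $\|y_2-y_1\|\le 2\lambda$. Everything else reduces to bookkeeping with the quadratic gap $\Delta$.
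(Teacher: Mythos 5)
The paper does not actually prove this lemma: it is imported wholesale as a special case of an external result (Theorem~6.1 of the cited reference on nonsmooth optimization), so your Ekeland-based argument is a genuinely different, self-contained route. The skeleton is sound: the two-sided model property does give $0\le G_{\bar x}(y)-g(y)\le \gamma\|y-\bar x\|^2$, convexity of $G_{\bar x}$ does make $y_1$ a $\gamma\|y_1-\bar x\|^2$-minimizer of $\phi(y)=g(y)-\langle v_1,y-y_1\rangle+\gamma\|y-\bar x\|^2$, and Ekeland with parameter $\lambda$ produces a point $y_2$ at distance at most $\lambda\le 2\lambda$ whose penalized minimality yields exactly the claimed estimate after the quadratic's gradient $-v_1+2\gamma(y_2-\bar x)$ is peeled off.

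The one step you flag as ``the main obstacle'' is, however, a genuine gap as written, and it is worth saying precisely why and how to close it. For a general closed function $g$ and the regular subdifferential defined in Section~2 of the paper, global minimality of $\phi+\frac{\epsilon}{\lambda}\|\cdot-y_2\|$ at $y_2$ does \emph{not} imply $\dist(0,\partial\phi(y_2))\le\frac{\epsilon}{\lambda}$: take $\phi(y)=-\|y\|$ and $y_2=0$, where $\phi+\|\cdot\|$ is minimized but $\partial\phi(0)=\emptyset$. In general one only recovers a small subgradient at yet another nearby point via a fuzzy sum rule, which would spoil the clean distance bound. The rescue in this setting is that the hypothesis is stronger than you are using: the existence of a \emph{convex} two-sided model $g_x$ with quadratic error $\frac{\gamma}{2}\|\cdot-x\|^2$ at every base point forces $g$ to be $\gamma$-weakly convex (apply the convex subgradient inequality for $g_x$ at $x$ and the lower model bound), so $g+\gamma\|\cdot-\bar x\|^2$, and hence $\phi$, is convex. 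Once $\phi$ is convex, the Moreau--Rockafellar sum rule turns the penalized minimality into $0\in\partial\phi(y_2)+\frac{\epsilon}{\lambda}\BB(0,1)$, the exact sum rule for the smooth quadratic identifies $\partial\phi(y_2)=\partial g(y_2)-v_1+2\gamma(y_2-\bar x)$ (all subdifferential notions coincide for weakly convex functions), and the triangle inequality finishes the proof; alternatively, convexity of $\phi$ lets you replace Ekeland by Br{\o}ndsted--Rockafellar applied to $0\in\partial_\epsilon\phi(y_1)$. With that observation made explicit, your argument is complete and arguably preferable to the paper's bare citation, since it is elementary and keeps the constants transparent.
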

%	\begin{proof}
%	This follows immediately from \cite{}, applied to the pair of functions $g(\cdot)$ and $g_x(\cdot)$ and setting $-u(\cdot)=\ell(\cdot)=\frac{\gamma}{2}\|\cdot-x\|^2$. 	Then get 
%	$\|y_1-y_2\|\leq 2\lambda$ and 	$$\dist(v_1,\partial g(y_2))\leq \lambda^{-1}\gamma\|y_1-x\|^2+\gamma\|y_2-x\|.$$ Triangle inequality gives
%	$$\dist(v_1,\partial g(y_2))\leq \lambda^{-1}\gamma\|y_1-x\|^2+\gamma\|y_2-y_1\|+\gamma\|y_1-x\|=  \gamma(\lambda^{-1}\|y_1-x\|^2+\|y_1-x\|) +2\lambda\gamma.$$
%	\end{proof}

Using Lemma~\ref{lem:subgrad_closeness}, we can pass from a lower bounds on $\dist(0,\partial g(y))$ to a lower bound on $\dist(0,\partial G_{\bar x}(y))$.

	\begin{corollary}[Slope of the objective and the models]\label{cor:main_supercor}
		Fix an arbitrary point $\bar x\in\R^d$ and suppose that there are constants $\epsilon,\kappa>0$ and $\alpha<0<\beta$ such that the condition holds:
	\begin{equation}\label{eqn:low_subg}
	\dist(0, \partial g(x))\geq \frac{1}{\kappa}\qquad \textrm{for all }x\in \mathbb{B}(\bar x,\epsilon)\cap[\alpha<g\leq \beta].
	\end{equation}
	Suppose that $g$ is $L$-Lipschitz constinuous on the ball $\mathbb{B}(\bar x,\epsilon) \cap \dom(g)$. 
	Choose a constant $$\delta<\min\left\{\frac{\epsilon}{3}, \frac{\beta}{2L},\frac{1}{14\gamma\kappa},\frac{L}{\gamma}\left(\sqrt{1-\frac{\alpha\gamma}{L^2}}-1\right)\right\}.$$ Then the estimate holds:
	$$\dist(0,\partial G_{\bar x}(y))\geq \frac{1}{2\kappa}\qquad \textrm{for all }y\in \mathbb{B}(\bar x,\delta)\cap [\ell<G_{\bar x}\leq u],$$
	where the constants $u:=\beta-2L\delta$ and $\ell:=\alpha+\gamma\delta^2+2L\delta$ satisfy $\ell<0<u$.
	\end{corollary}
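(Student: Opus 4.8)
The plan is to fix an arbitrary point $y\in \BB(\bar x,\delta)\cap[\ell<G_{\bar x}\le u]$ together with an arbitrary subgradient $v\in\partial G_{\bar x}(y)$, and to show directly that $\|v\|\ge \frac{1}{2\kappa}$; since $v$ is arbitrary this yields the desired bound $\dist(0,\partial G_{\bar x}(y))\ge \frac{1}{2\kappa}$. The only available link between $\partial G_{\bar x}$ and $\partial g$ is the slope comparison result, Lemma~\ref{lem:subgrad_closeness}, so I would apply it at the point $y$ with the subgradient $v$. This produces a nearby point $y_2$ and, since the distance bound is finite, a subgradient $v_2\in\partial g(y_2)$ whose distance from $v$ is controlled. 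If $y_2$ can be shown to lie in the region $\BB(\bar x,\epsilon)\cap[\alpha<g\le\beta]$ where hypothesis \eqref{eqn:low_subg} is active, then $\|v_2\|\ge\frac{1}{\kappa}$, and the triangle inequality $\|v\|\ge\|v_2\|-\|v-v_2\|$ finishes the argument provided the perturbation $\|v-v_2\|$ does not exceed $\frac{1}{2\kappa}$.

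The free parameter in Lemma~\ref{lem:subgrad_closeness} is $\lambda$, and I would simply take $\lambda=\delta$. This gives $\|y-y_2\|\le 2\delta$, hence $\|y_2-\bar x\|\le 3\delta$, and, using $\|y-\bar x\|\le\delta$, the estimate $\dist(v,\partial g(y_2))\le \frac{\gamma\|y-\bar x\|^2}{\delta}+2\gamma\|y_2-\bar x\|\le \gamma\delta+6\gamma\delta=7\gamma\delta$. Since $\delta<\frac{1}{14\gamma\kappa}$, the right-hand side is strictly below $\frac{1}{2\kappa}$, which is exactly the perturbation budget we need. The ball membership $\|y_2-\bar x\|\le 3\delta<\epsilon$ is immediate from $\delta<\epsilon/3$.

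The delicate step, and the main obstacle, is verifying the sublevel membership $\alpha<g(y_2)\le\beta$; this is where the otherwise opaque definitions $u=\beta-2L\delta$ and $\ell=\alpha+\gamma\delta^2+2L\delta$ originate. I would first convert the bound on $G_{\bar x}(y)$ into a bound on $g(y)$ via the two-sided model estimate $|g_{\bar x}(y)-g(y)|\le\frac{\gamma}{2}\|y-\bar x\|^2$: expanding $G_{\bar x}(y)=g_{\bar x}(y)+\frac{\gamma}{2}\|y-\bar x\|^2$ yields the sandwich $G_{\bar x}(y)-\gamma\|y-\bar x\|^2\le g(y)\le G_{\bar x}(y)$. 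Combined with $\ell<G_{\bar x}(y)\le u$ and $\|y-\bar x\|\le\delta$, this gives $\alpha+2L\delta<g(y)\le\beta-2L\delta$; the buffers $2L\delta$ are built into $\ell$ and $u$ precisely to create this room. Finally, both $y$ and $y_2$ lie in $\BB(\bar x,\epsilon)\cap\dom g$ (finiteness of $g$ there follows from finiteness of $g_{\bar x}(y)$ and nonemptiness of $\partial g(y_2)$), so $L$-Lipschitz continuity gives $|g(y)-g(y_2)|\le L\|y-y_2\|\le 2L\delta$; hence $g(y_2)$ stays within $2L\delta$ of $g(y)$ and lands in $(\alpha,\beta]$.

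It remains to record $\ell<0<u$. Positivity of $u$ is precisely $\delta<\frac{\beta}{2L}$, and negativity of $\ell=\alpha+\gamma\delta^2+2L\delta$ amounts to $\delta$ lying below the positive root of the quadratic $\gamma t^2+2Lt+\alpha$, namely $\frac{L}{\gamma}\bigl(\sqrt{1-\frac{\alpha\gamma}{L^2}}-1\bigr)$, which is exactly the last term in the minimum defining $\delta$. With the membership established, applying \eqref{eqn:low_subg} at $y_2$ gives $\|v_2\|\ge\frac{1}{\kappa}$, and then $\|v\|\ge\|v_2\|-\|v-v_2\|\ge\frac{1}{\kappa}-7\gamma\delta>\frac{1}{2\kappa}$, completing the proof.
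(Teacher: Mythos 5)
Your proposal is correct and follows essentially the same route as the paper's proof: apply Lemma~\ref{lem:subgrad_closeness} with $\lambda=\delta$, bound the perturbation by $7\gamma\delta<\frac{1}{2\kappa}$, verify $y_2\in\mathbb{B}(\bar x,\epsilon)\cap[\alpha<g\leq\beta]$ using the two-sided model bound together with Lipschitz continuity (the paper bounds $g(y_2)$ directly from $G_{\bar x}(y_1)$ rather than passing through $g(y)$ first, but the computation is identical), and conclude by the triangle inequality. The accounting of the constants $u$, $\ell$, and the four terms in the definition of $\delta$ matches the paper exactly.
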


	\begin{proof}
	The inequalities $\ell<0<u$	are immediate from the definition of $\delta$. Fix now a point $y_1\in \mathbb{B}(\bar x,\delta)\cap [\ell<G_{\bar x}\leq u]$ and a subgradient $v_1\in \partial G_{\bar x}(y_1)$ of minimal norm. Applying Lemma~\ref{lem:subgrad_closeness} with $\lambda:=\delta$ guarantees that there exists a point $y_2\in \R^d$ satisfying 
		$$\|y_1-y_2\|\leq 2\delta\qquad\textrm{and}\qquad\dist(v_1,\partial g(y_2))\leq \frac{\gamma\|y_1-\bar x\|^2}{\delta}+2\gamma\|y_2-\bar x\|.$$
	In particular, we deduce 
	$$\|y_2-\bar x\|\leq \|y_1-y_2\|+\|y_1-\bar x\|\leq 3\delta< \epsilon.$$
	It follows that $y_1$ and $y_2$ both lie in $\mathbb{B}(\bar x,\epsilon)$. Using Lipschitz continuity of $g$ on this ball, we deduce
		$$g(y_2)\leq g(y_1)+L\|y_1-y_2\|\leq u +2L\delta = \beta,$$
	and 
	$$g(y_2)\geq g(y_1)-L\|y_1-y_2\|\geq G_{\bar x}(y_1)-\gamma\|y_1-\bar x\|^2-L\|y_1-y_2\|>\ell-\gamma\delta^2-2L\delta= \alpha.$$
	 Therefore, the inclusion $y_2\in \mathbb{B}(\bar x,\epsilon)\cap [\alpha<g\leq \beta]$ holds. Using \eqref{eqn:low_subg}, we deduce 
	 \begin{align*}
	 \kappa^{-1}\leq \dist(0,\partial g(y_2))&\leq \|v_1\|+\dist(v_1,\partial g(y_2))\\
	 &\leq \|v_1\|+\frac{\gamma\|y_1-\bar x\|^2}{\delta}+2\gamma\|y_2-\bar x\|\\
	 &\leq \|v_1\|+7\gamma\delta.
	 \end{align*}
	 Rearranging yields,
	 $\dist(0,\partial G_{\bar x}(y_1))=\|v_1\|\geq \kappa^{-1}-7\gamma\delta> \frac{1}{2\kappa},$
	 as claimed.	 
%	$\|y_1-y_2\|\leq 2\lambda$ and $\|y_2-x\|\leq \|y_1-x\|+\|y_1-y_2\|\leq \|y_1-x\|+2\lambda$ and
%	\begin{align*}
%	\dist(0,\partial g(y_2))&\leq \|v_1\|+\dist(v_1,\partial g(y_2))\\
%	&\leq \|v_1\|+\gamma(\lambda^{-1}\|y_1-x\|^2+\|y_1-x\|) +2\lambda\gamma.
%	\end{align*}	
%	and 
%	$$g(y_2)\leq g(y_1)+L\|y_1-y_2\|\leq r'+2L\lambda.$$
	\end{proof}

We are now ready for the proof of Theorem~\ref{thm:main_thm_func_constr}.

\begin{proof}[Proof of Theorem~\ref{thm:main_thm_func_constr}]
	As in Corollary~\ref{cor:main_supercor}, define $u:=\min\{\beta-2L\delta, \frac{\delta}{8\kappa}\}$ and $\ell:=\alpha+\gamma\delta^2+2L\delta$ and note $\ell<0<u$.
Fix an arbitrary point $\bar x\in [\ell< g\leq 0]$. Then Corollary~\ref{cor:main_supercor} yields the estimate:
	$$\dist(0,\partial G_{\bar x}(y))\geq \frac{1}{2\kappa}\qquad \textrm{for all }y\in \mathbb{B}(\bar x,\delta)\cap [\ell<G_{\bar x}\leq u],$$
Taking into account $0<u<\frac{\delta}{4\kappa}$, Corollary~\ref{cor:nondeg_slop_model} immediately guarantees
		$$\dist(y,\cX_{\bar x})\leq 2\gamma\kappa\cdot \|y- {\bar x}\|^2 \qquad \forall y\in \cX\cap \BB\left(\bar x, r_1\right).$$	
		Next, fix a point $\bar x\in [g\leq \ell]$. Then for every point $y\in \cX\cap \BB(\bar x,r_1)$, we compute
		\begin{align*}
		G_{\bar x}(y)\leq g(y)+\gamma\|y-\bar x\|^2
		&\leq  g(\bar x)+L\|y-\bar x\|+\gamma\|y-\bar x\|^2\\
		&\leq \ell+Lr_1+\gamma r_1^2\\
		&\leq \alpha+\gamma\delta^2+2L\delta+L\delta+\gamma\delta^2\leq 0.
		\end{align*}
		We conclude $\dist(y,\cX_{\bar{x}})=0$.	
		The proof is complete.
%where we define the radius $\nu:= \min\{\delta/2,\sqrt{ u/\gamma}\}$. The proof is complete.
%Consider now a point $x\in [g\leq \ell]$. Then for every point $y\in \cX\cap \BB(x,r_1)$, we compute
%$$G_x(y)\leq g(y)+\gamma\|y-x\|^2\leq  \bar \ell+L\|y-x\|+\gamma\|y-x\|^2\textcolor{red}{\leq 0}.$$
%We conclude $\dist(y,\cX_x)=0$,	
\end{proof}

	\section{Retracted model-based algorithm}\label{sec:main_results}
	In this section, 
	we 
	generalize 
	Algorithm~\ref{alg:stoc_prox_noret} 
	by 
	allowing one 
	to replace 
	$\cX$ 
	in 
	the subproblem \eqref{eqn:subprob_to_solve} by a close approximation.
	Namely, 
	the algorithm we propose (Algorithm~\ref{alg:stoc_prox}) 
	will use 
	three building blocks: 
	a family 
	of 
	stochastic models $f_{x}(\cdot,\xi)$ 
	of 
	the objective function, 
	a family 
	of 
	proximally smooth sets $\cX_x$ 
	that 
	approximate $\cX$ near $x$, 
	and 
	a retraction operation $\cR_x\colon\cX_x\to\cX$ 
	that 
	restores feasibility. 
	Thus in each iteration $t$, 
	Algorithm~\ref{alg:stoc_prox} 
	forms 
	both 
	a proximally smooth approximation $\cX_{x_t}$ 
	of 
	the original constraint 
	and  
	a stochastic model $f_{x_t}(\cdot,\xi_t)$ 
	of 
	the objective function, 
	centered at the current iterate $x_t$. 
	The procedure then 
	computes 
	a minimizer ${\tilde x}_t$ 
	of 
	the function  $f_{x_t}(\cdot,\xi_t)+\frac{\beta}{2}\|\cdot-x_t\|^2$ 
	over
	$\cX_{x_t}$ 
	and 
	retracts 
	it back to $\cX$, thereby defining the next iterate $x_{t+1}=\cR_{x_t}(\tilde x_t)$.

	\smallskip
	\begin{algorithm}[H]
		{\bf Input:} initialization $x_0\in \R^d$,   a sequence $\beta_t>0$,  and an iteration count $T\in \mathbb{N}$.
		\break{\bf Step } $t=0,\ldots,T$:	
		\begin{equation*}
		\begin{aligned}
		&\textrm{Sample } \xi_t \sim P\\
		& \textrm{Choose } \tilde x_{t} \in \argmin_{x \in \cX_x}~ \left\{f_{x_t}(x,\xi_t) + \frac{\beta_t}{2} \|x - x_t\|^2\right\}\\
		& \textrm{Set }  x_{t+1} = \cR_{x_t}(\tilde x_t)
		\end{aligned}
		\end{equation*}
		\caption{Retracted Stochastic Model Based Algorithm
			%: PSSM($x_0,T,\{\alpha_t\}$)
		}
		\label{alg:stoc_prox}
	\end{algorithm}
	\smallskip

	The success of Algorithm~\ref{alg:stoc_prox}, unsurprisingly, relies on the approximation quality of both the stochastic models $f_{x_t}$ and the set approximations $\cX_{x_t}$. Henceforth, we impose the following assumptions.
	
	\begin{assumption}
		{\rm
			Fix a probability space $(\Omega,\mathcal{F},P)$ and equip $\R^d$ with the Borel $\sigma$-algebra. 
			We assume that there exist real $\eta,\osa, L, R, \accD, \accR, r_1, r_2 \in \R$ satisfying the following properties. 
			\begin{enumerate}
				\item[(B1)] {\bf (Sampling)} It is possible to generate i.i.d.\ realizations $\xi_1,\xi_2, \ldots \sim P$.
				\item[(B2)]\label{it:B2} {\bf (One-sided accuracy)} There is an open set $U$ containing $\cX $ and a measurable function $(x,y,\xi)\mapsto f_x(y,\xi)$, defined on $U\times U\times\Omega$, satisfying  $$\EE_{\xi}\left[f_x(x,\xi)\right]=f(x)
				\quad \textrm{and} 
				\quad\EE_{\xi}\left[f_x(y,\xi)-f(y)\right]
				\leq\frac{\osa}{2}\|y-x\|^2\qquad \forall x,y\in U.$$
				\item[(B3)] {\bf (Weak-convexity)} The function $f_x(\cdot,\xi)$ is $\eta$-weakly convex $\forall x\in U$, a.e. $\xi\sim P$.
				\item [(B4)] {\bf (Set approximation)} There exists a collection $\{(\cX_x,\cR_{x})\}_{x\in\cX}$ that is a set approximation of $\cX$ with parameters $(R,\accD,r_1,\accR ,r_2)$.
				\item[(B6)] {\bf (Lipschitz property)} $f$ is $L$-Lipschitz continuous on $\cX$; and for all $x\in \cX$ and a.e. $\xi\sim P$, the function $f_x(\cdot, \xi)$ is $L$-Lipschitz continuous on some neighborhood of $\cX_x$.
		\end{enumerate}}
	\end{assumption}

	The following theorem---the main result of this work---summarizes convergence guarantees for Algorithm~\ref{alg:stoc_prox}.

	\begin{theorem}[Convergence guarantees]\label{thm:gen_conv_guarant}
		Without loss of generality, suppose $r_1<R$ and define the constants $\nu := \frac{R}{2(R-r_1)^2} $ and $\gamma:=\eta+3L\nu$. Fix a real $\bar{\rho} > \max\{\frac{2L}{r_1}, \gamma+\osa+3\accD L\}$ and a sequence $\beta_t > \max\{\frac{2L}{r_2},\gamma\}$. Let $\{x_t\}_{t=0}^T$ be the iterates generated by  Algorithm~\ref{alg:stoc_prox} and let  $t^*\in \{0,\ldots,T\}$ be sampled according to the discrete probability distribution
		$\mathbb{P}(t^*=t)\propto\frac{1}{\beta_t - \eta-\frac{3LR}{2(R-r_1)^2}}$.
		Then $x_{t^*}$ satisfies the estimate:	
		\begin{align*}
		\EE_t\left[\cC_{1/\bar \rho}(x_{t^\ast})^2\right] \leq \frac{2\bar\rho(f(x_0) - \min_{\cX} f) + \bar\rho^2L^2\sum_{t=0}^T a_t}{\sum_{t=0}^T \frac{\bar \rho - \gamma - \osa -  3\accD L}{\beta_t - \gamma }},
		\end{align*}
		where we define
		$$
		a_t :=  \frac{1}{\beta_t(\beta_t - \gamma)} + \frac{8\accR L(\bar \rho^{-1} + \beta_t^{-1})}{\beta_t^2} + \frac{4\accR ^2 L^2}{\beta_t^4}.
		$$
		In particular, if we set $\beta_t=\gamma+\frac{\sqrt{T+1}}{\alpha}$ for some positive  $\alpha < \frac{r_2}{2L-\gamma r_2}$, then it holds:
		\begin{align*}
		\EE_t\left[\cC_{1/\bar \rho}(x_{t^\ast})^2\right] \leq & \frac{2\bar\rho(f(x_0) - \min_{\cX} f)}{\alpha(\bar \rho - \gamma - \osa -  3\accD L)\sqrt{T+1}}\\
		&\quad+\frac{\alpha\bar \rho^2L^2}{\bar \rho - \gamma - \osa -  3\accD L}\left[\frac{1+\accR L(\bar \rho^{-1}+\frac{\alpha}{\sqrt{T+1}})}{\sqrt{T+1}}+\frac{4\alpha^2\accR ^2L^2}{(T+1)^{3/2}}\right].
		\end{align*}
		
	\end{theorem}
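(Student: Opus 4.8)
The plan is to reuse the Lyapunov scheme behind Theorem~\ref{thm:simple_result}, treating $\cM_{1/\bar\rho}$ as an approximate potential, while absorbing the two extra error sources created by the set approximation $\cX_{x_t}$ and the retraction $\cR_{x_t}$. The crux is a retracted analogue of the one-step improvement Lemma~\ref{lem:one_step}: for any proximal point $\hat x_t\in\cP_{1/\bar\rho}(x_t)$,
$$\EE_t\big[\|\hat x_t-x_{t+1}\|^2\big]\le \frac{\beta_t+\osa+3\accD L-\bar\rho}{\beta_t-\gamma}\,\|\hat x_t-x_t\|^2+L^2 a_t.$$
Granting this, the endgame copies the proof of Theorem~\ref{thm:simple_result}: inserting the bound into $\cM_{1/\bar\rho}(x_{t+1})\le f(\hat x_t)+\tfrac{\bar\rho}{2}\|x_{t+1}-\hat x_t\|^2$, using $f(\hat x_t)+\tfrac{\bar\rho}{2}\|\hat x_t-x_t\|^2=\cM_{1/\bar\rho}(x_t)$ and $\|\hat x_t-x_t\|^2=\bar\rho^{-2}\cC_{1/\bar\rho}(x_t)^2$, yields the per-step descent
$$\cM_{1/\bar\rho}(x_{t+1})\le \cM_{1/\bar\rho}(x_t)-\frac{\bar\rho-\gamma-\osa-3\accD L}{2\bar\rho(\beta_t-\gamma)}\,\cC_{1/\bar\rho}(x_t)^2+\frac{\bar\rho L^2}{2}a_t.$$
Taking expectations, telescoping with the tower rule, using $\cM_{1/\bar\rho}(x_0)-\min\cM_{1/\bar\rho}\le f(x_0)-\min_\cX f$, and multiplying through by $2\bar\rho$ reproduces the stated estimate; the specialization then follows by plugging in $\beta_t=\gamma+\sqrt{T+1}/\alpha$ and noting that $\alpha<r_2/(2L-\gamma r_2)$ forces $\beta_t>2L/r_2$.

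Two localizations make the building-block assumptions usable. Applying Lemma~\ref{lem:prox_point_bound} to the proximal subproblem over $\cX$ (with modulus $\bar\rho$) gives $\|\hat x_t-x_t\|\le 2L/\bar\rho$, and the hypothesis $\bar\rho>2L/r_1$ then places $\hat x_t$ within radius $r_1$ of $x_t$, so that the accuracy bound \eqref{eqn:need_this_eqn} is legitimate at $y=\hat x_t$. Likewise $\tilde x_t$ minimizes $f_{x_t}(\cdot,\xi_t)+\tfrac{\beta_t}{2}\|\cdot-x_t\|^2$ over $\cX_{x_t}$, so Lemma~\ref{lem:prox_point_bound} gives $\|\tilde x_t-x_t\|\le 2L/\beta_t<r_2$ by $\beta_t>2L/r_2$, which makes the retraction bound $(ii)$ applicable at $y=\tilde x_t$.

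To prove the one-step estimate I would chain three comparisons. The retraction step uses $(ii)$ and $\|\tilde x_t-x_t\|\le 2L/\beta_t$ to get $\|x_{t+1}-\tilde x_t\|\le \tfrac{\accR}{2}\|\tilde x_t-x_t\|^2\le 2\accR L^2/\beta_t^2$; expanding $\|\hat x_t-x_{t+1}\|^2=\|\hat x_t-\tilde x_t\|^2+2\langle \hat x_t-\tilde x_t,\,\tilde x_t-x_{t+1}\rangle+\|\tilde x_t-x_{t+1}\|^2$ and bounding the cross term by Cauchy--Schwarz together with $\|\hat x_t-\tilde x_t\|\le 2L(\bar\rho^{-1}+\beta_t^{-1})$ produces precisely the $\accR$ and $\accR^2$ summands of $a_t$. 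To control $\|\hat x_t-\tilde x_t\|^2$ I would run the three-point inequality on the subproblem over the $R$-proximally smooth set $\cX_{x_t}$; since $\hat x_t$ may fail to lie in $\cX_{x_t}$, I would take the comparison point $\bar y_t:=\proj_{\cX_{x_t}}(\hat x_t)$, which by \eqref{eqn:need_this_eqn} satisfies $\|\bar y_t-\hat x_t\|\le \tfrac{\accD}{2}\|\hat x_t-x_t\|^2$, apply Lemma~\ref{lem:3_pt_ineq} to $f_{x_t}(\cdot,\xi_t)$ at $y=\bar y_t$, and then transfer both function values and quadratic terms from $\bar y_t$ back to $\hat x_t$ using the $L$-Lipschitzness of $f_{x_t}$ from (B6). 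This transfer is the source of the drift term $3\accD L$ and of the modified constant $\gamma=\eta+3L\nu$. Finally, passing from $f_{x_t}$ to $f$ through one-sided accuracy (B2) and invoking the optimality of $\hat x_t$ for the $\bar\rho$-proximal subproblem supplies the $\tfrac{\osa-\bar\rho}{2}\|\hat x_t-x_t\|^2$ contribution, exactly as in \eqref{eqn:it2_nee}--\eqref{eqn:it3_nee}, while maximizing $L\delta-\tfrac{\beta_t}{2}\delta^2$ in $\delta=\sqrt{\EE_t\|\tilde x_t-x_t\|^2}$ recovers the leading $1/(\beta_t(\beta_t-\gamma))$ summand of $a_t$.

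The main obstacle is bookkeeping rather than ideas: both new errors enter as quartic quantities, $\|\bar y_t-\hat x_t\|\le\tfrac{\accD}{2}\|\hat x_t-x_t\|^2$ and $\|x_{t+1}-\tilde x_t\|\le\tfrac{\accR}{2}\|\tilde x_t-x_t\|^2$, and must be downgraded to admissible quadratic-in-$\|\hat x_t-x_t\|$ or $O(\beta_t^{-k})$ terms using the a priori radii $2L/\bar\rho$ and $2L/\beta_t$; keeping the constants tight enough that the drift coefficient stays strictly negative---precisely the role of $\bar\rho>\gamma+\osa+3\accD L$---and that the residual noise collapses into exactly the stated $a_t$ is the delicate part. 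I also expect the replacement of the three-point constant $\tfrac{3L}{R}$ by $3L\nu=\tfrac{3LR}{2(R-r_1)^2}$ to require re-deriving the uniform normal inequality of Lemma~\ref{lem_basic_prop_man}(\ref{it3:lem}) on the tube of radius $r_1$ around $\cX_{x_t}$, where the projector is $\tfrac{R}{R-r_1}$-Lipschitz, so that the squared factor $(R/(R-r_1))^2$ multiplies the base constant $\tfrac{1}{2R}$ to yield exactly $\nu$.
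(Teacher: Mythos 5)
Your architecture coincides with the paper's: the same localizations via Lemma~\ref{lem:prox_point_bound} (placing $\hat x_t$ within $r_1$ of $x_t$ and $\tilde x_t$ within $r_2$), the same retraction claim producing the $\accR$ and $\accR^2$ summands of $a_t$, the same passage through (B2)/(B6) and the optimality of $\hat x_t$, the same maximization of $L\delta-\tfrac{\beta_t}{2}\delta^2$, and the same Moreau-envelope descent and telescoping at the end. The one place you diverge is the key technical step, namely how the set-approximation error enters the three-point inequality. The paper does \emph{not} project $\hat x_t$ onto $\cX_{x_t}$ and transfer function values; instead it proves a perturbed three-point inequality directly for comparison points $y\in\cX\cap\BB(x_t,r_1)$ by writing first-order optimality as $\beta_t(x_t-\tilde x_t)=v+w$ with $w\in N_{\cX_{x_t}}(\tilde x_t)$ and splitting the single pairing $\dotp{w,y-\tilde x_t}=\dotp{w,y-\proj_{\cX_{x_t}}(y)}+\dotp{w,\proj_{\cX_{x_t}}(y)-\tilde x_t}$; the first piece is bounded by $\|w\|\dist(y,\cX_{x_t})\le\tfrac{\|w\|\accD}{2}\|y-x_t\|^2$ (whence exactly $3\accD L$ after $\|w\|\le 3L$), and the second by the uniform normal inequality on $\cX_{x_t}$ combined with the $\tfrac{R}{R-r_1}$-Lipschitzness of $\proj_{\cX_{x_t}}$ on the tube (whence exactly $\nu$ and $\gamma=\eta+3L\nu$). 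Your alternative---apply the clean Lemma~\ref{lem:3_pt_ineq} at $\bar y_t=\proj_{\cX_{x_t}}(\hat x_t)$ and transfer---is workable in principle but would not reproduce the stated constants: besides the $L\|\bar y_t-\hat x_t\|$ function-value transfer, you must also transfer the quadratic terms $\|\bar y_t-\tilde x_t\|^2$ and $\|\bar y_t-x_t\|^2$ back to $\hat x_t$, which generates cross terms of order $\accD L(\bar\rho^{-1}+\beta_t^{-1})\|\hat x_t-x_t\|^2$ and $\accD r_1\|\hat x_t-x_t\|^2$ rather than the clean $3\accD L\|\hat x_t-x_t\|^2$, and in that route the three-point constant stays at $\tfrac{3L}{R}$ (both comparison points lie in $\cX_{x_t}$), so your closing remark about re-deriving the normal inequality on the tube to get $\nu$ belongs to the paper's decomposition, not to yours. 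So you would obtain a theorem of the same form with somewhat different constants; to get the literal statement, adopt the paper's splitting of the normal-cone pairing.
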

	
	The following section develops a proof of Theorem~\ref{thm:gen_conv_guarant}.
	
	\subsection{Proof of Theorem~\ref{thm:gen_conv_guarant}}

	The proof parallels that of Theorem~\ref{thm:simple_result}. The main technical difficulty is that we will need to modify the three point inequality slightly in order to take into account the presence of retractions. 
	In what follows, we let   $x_0,\ldots, x_t$ and $\xi_0,\ldots, \xi_{t}$ be the iterates and random elements generated by Algorithm~\ref{alg:stoc_prox}. We will use the shorthand $\EE_t$ to denote the expectation conditioned on $\xi_0,\ldots, \xi_{t-1}$.  We begin with a key one-step improvement lemma. 
	
	The main part of the analysis is an analogue of the one-step decrease Lemma~\ref{lem:one_step}; this is the content of the following lemma. Throughout, without loss of generality, we assume $r_1\leq R$.

	\begin{lemma}[One-step improvement]\label{lem:stoc_prox_descent}
		Fix an index $t$ and choose  $\hat x_t\in \cP_{1/\bar \rho}(x)$. Then the estimate holds:
		\begin{equation}
		\EE_t\left[\|\hat x_t - x_{t+1}\|^2\right] \leq  \frac{\beta +\osa+ 3\accD L - \bar \rho }{\beta- \gamma}\|x_t - {\hat x}_t \|^2 + L^2 a_t.\label{ineq:stoc_prox_descent}
		\end{equation}
		%	\end{align}
	\end{lemma}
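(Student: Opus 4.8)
The plan is to mirror the proof of Lemma~\ref{lem:one_step}, inserting two modifications that reflect the new features of Algorithm~\ref{alg:stoc_prox}: the subproblem is solved over the approximation $\cX_{x_t}$ rather than over $\cX$, and the output is retracted via $x_{t+1}=\cR_{x_t}(\tilde x_t)$. Throughout I fix $\hat x_t\in\cP_{1/\bar\rho}(x_t)$ and write $\tilde x_t$ for the computed minimizer over $\cX_{x_t}$. Two preliminary estimates from Lemma~\ref{lem:prox_point_bound} are used repeatedly: $\|x_t-\tilde x_t\|\le 2L/\beta_t$ and $\|x_t-\hat x_t\|\le 2L/\bar\rho$. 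The standing assumptions $\beta_t>2L/r_2$ and $\bar\rho>2L/r_1$ are exactly what force $\tilde x_t\in\B(x_t,r_2)$ and $\hat x_t\in\B(x_t,r_1)$, so that the retraction bound of property $(ii)$ and the accuracy bound \eqref{eqn:need_this_eqn} apply at these points; note also that $x_t\in\cX_{x_t}$, since \eqref{eqn:need_this_eqn} with $y=x_t$ forces $\dist(x_t,\cX_{x_t})=0$, whence $\hat x_t\in U_{\cX_{x_t}}(r_1)$ as well.

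The first and main step is a version of the three-point inequality valid at the proximal point $\hat x_t$, which in general does \emph{not} lie in $\cX_{x_t}$. I would re-derive \eqref{eqn:proximal_ineq} with $f_{x_t}(\cdot,\xi_t)$ in place of $f$ and $\cX_{x_t}$ in place of $\cX$, at the minimizer $z=\tilde x_t$, for which first-order optimality gives $\beta_t(x_t-\tilde x_t)=v+w$ with $v\in\partial f_{x_t}(\tilde x_t,\xi_t)$ and $w\in N_{\cX_{x_t}}(\tilde x_t)$. The subgradient inequality \eqref{eqn:subgrad_ineq} is global, so it may be evaluated at $y=\hat x_t$ directly. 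The uniform normal inequality \eqref{ineq:proximal_normal}, however, needs its argument in $\cX_{x_t}$; I therefore apply it at $\bar y_t:=\proj_{\cX_{x_t}}(\hat x_t)$ and transfer the estimate back to $\hat x_t$. Since $\hat x_t\in U_{\cX_{x_t}}(r_1)$, the Lipschitz-projector bound of Lemma~\ref{lem_basic_prop_man}(1) gives $\|\bar y_t-\tilde x_t\|\le\frac{R}{R-r_1}\|\hat x_t-\tilde x_t\|$, while \eqref{eqn:need_this_eqn} controls $\|\hat x_t-\bar y_t\|=\dist(\hat x_t,\cX_{x_t})\le\frac{\accD}{2}\|\hat x_t-x_t\|^2$. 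Combined with $\|w\|\le 3L$ (bounded exactly as in Lemma~\ref{lem:3_pt_ineq}), the term $\frac{\|w\|}{2R}\|\bar y_t-\tilde x_t\|^2$ becomes a multiple of $\nu\|\hat x_t-\tilde x_t\|^2$ with $\nu=\frac{R}{2(R-r_1)^2}$, which is precisely where the constant $\gamma=\eta+3L\nu$ enters, together with an error of order $\accD L\|\hat x_t-x_t\|^2$. Completing the square then yields
\begin{equation*}
f_{x_t}(\hat x_t,\xi_t)-f_{x_t}(\tilde x_t,\xi_t)\ge\tfrac{\beta_t-\gamma}{2}\|\hat x_t-\tilde x_t\|^2+\tfrac{\beta_t}{2}\|x_t-\tilde x_t\|^2-\tfrac{\beta_t}{2}\|\hat x_t-x_t\|^2-\tfrac{3\accD L}{2}\|\hat x_t-x_t\|^2.
\end{equation*}
I expect this transfer of the normal inequality---carefully tracking the constant multiplying $\|\hat x_t-\tilde x_t\|^2$ as one passes through the projection---to be the main obstacle, since it is the only place the geometry of the approximation $\cX_{x_t}$ (rather than $\cX$) enters.

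Given the displayed inequality, the rest follows the template of Lemma~\ref{lem:one_step} almost verbatim. Rearranging for $\frac{\beta_t-\gamma}{2}\|\hat x_t-\tilde x_t\|^2$, I would bound $f_{x_t}(\hat x_t,\xi_t)-f_{x_t}(\tilde x_t,\xi_t)$ from above using the Lipschitz property (B6) to pass from $\tilde x_t$ to $x_t$, the one-sided accuracy (B2) with the tower rule to replace the models by $f$ in expectation, and optimality of the proximal point to produce the $-\frac{\bar\rho}{2}\|\hat x_t-x_t\|^2$ term. Collecting the coefficients of $\|\hat x_t-x_t\|^2$ (now carrying the extra $\frac{3\accD L}{2}$ from the displayed inequality) and maximizing $L\delta-\frac{\beta_t}{2}\delta^2$ over $\delta:=\sqrt{\EE_t\|x_t-\tilde x_t\|^2}$ gives
\begin{equation*}
\EE_t\left[\|\hat x_t-\tilde x_t\|^2\right]\le\frac{\beta_t+\osa+3\accD L-\bar\rho}{\beta_t-\gamma}\|x_t-\hat x_t\|^2+\frac{L^2}{\beta_t(\beta_t-\gamma)}.
\end{equation*}

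Finally I account for the retraction. Property $(ii)$ together with $\|x_t-\tilde x_t\|\le 2L/\beta_t$ yields the deterministic bound $\|\tilde x_t-x_{t+1}\|=\|\tilde x_t-\cR_{x_t}(\tilde x_t)\|\le\frac{\accR}{2}\|x_t-\tilde x_t\|^2\le\frac{2\accR L^2}{\beta_t^2}$, and the triangle inequality gives $\|\hat x_t-\tilde x_t\|\le\|\hat x_t-x_t\|+\|x_t-\tilde x_t\|\le 2L(\bar\rho^{-1}+\beta_t^{-1})$. Expanding
\begin{equation*}
\|\hat x_t-x_{t+1}\|^2\le\|\hat x_t-\tilde x_t\|^2+2\|\hat x_t-\tilde x_t\|\,\|\tilde x_t-x_{t+1}\|+\|\tilde x_t-x_{t+1}\|^2
\end{equation*}
and substituting the two bounds reproduces exactly the cross term $\frac{8\accR L^3(\bar\rho^{-1}+\beta_t^{-1})}{\beta_t^2}$ and the square term $\frac{4\accR^2L^4}{\beta_t^4}$, which together with $\frac{L^2}{\beta_t(\beta_t-\gamma)}$ assemble into $L^2a_t$. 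Taking $\EE_t$ of the last display and inserting the bound on $\EE_t[\|\hat x_t-\tilde x_t\|^2]$ then gives \eqref{ineq:stoc_prox_descent}, completing the proof.
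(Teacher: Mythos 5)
Your proposal follows the paper's proof essentially step for step: the same first-order optimality decomposition $\beta_t(x_t-\tilde x_t)=v+w$ with $\|w\|\le 3L$, the same transfer of the uniform normal inequality through $\proj_{\cX_{x_t}}(\hat x_t)$ using the Lipschitz-projector bound (yielding $\nu$ and the $\accD L\|\hat x_t-x_t\|^2$ error), the same rearrangement via (B6), (B2), and proximal optimality, and the same triangle-inequality accounting for the retraction producing the cross and square terms in $a_t$. This is correct and matches the paper's argument.
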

	\begin{proof}
		Since the statement of the theorem is independent of $x_0,\ldots, x_{t-1}$, we will simplify the notation by dropping the index $t$ in $x_t$, ${\tilde x}_t$, ${\hat x}_t$, $\xi_t$, $\EE_t$, $a_t$ and setting $x^+:=x_{t+1}$.
		By the definition of $\tilde x$, there exist vectors $v \in \partial f_{x}(\tilde x, \xi)$ and $w \in N_{\cX_{x}}(\tilde x)$ satisfying  $\beta(x - \tilde x) = v + w.$
		Using Lemma~\ref{lem:prox_point_bound} and Lipschitz continuity of $f_x(\cdot,\xi)$ yields the estimate $\|w\| \leq 3L$. Fix any  $y \in \cX\cap\BB(x,r_1)$. We then deduce
		\begin{align}
		\dotp{w, y- \tilde x} &= \dotp{w, (y - \proj_{\cX_x}(y))} + \dotp{w, \proj_{\cX_x}(y) - \tilde x}\notag\\
		&\leq \|w\|\cdot \dist(y, \cX_x) + \frac{\|w\|}{2R} \cdot\|\proj_{\cX_x}(y) - \tilde x\|^2\label{eqn:part2it1}\\
		&\leq \frac{\|w\|\accD}{2} \|y - x\|^2 + \frac{\|w\| \nu}{2} \|y - \tilde x\|^2,\label{eqn:part2it2}
		\end{align}
		where \eqref{eqn:part2it1} follows from the Cauchy-Schwarz inequality and from Lemma~\ref{lem_basic_prop_man}, while \eqref{eqn:part2it2} follows from (B4) and Lemma~\ref{lem_basic_prop_man}.
		Adding this estimate to the subgradient inequality for $f_x(\cdot,\xi)$ and completing the square, we conclude  for all $y \in \cX\cap\BB(x,r_1)$ the  bound:
		\begin{align}
		f_{x}(y, \xi) &\geq f_{x}(\tilde x, \xi) + \dotp{v + w, y- \tilde x} - \frac{\eta + \|w\| \nu}{2} \|y - \tilde x\|^2 - \frac{\|w\|\accD}{2}\|y - x\|^2\notag\\
		&\geq f_{x}(\tilde x, \xi) + \dotp{\beta(x - \tilde x) , y- \tilde x} - \frac{\eta + \|w\| \nu}{2} \|y - \tilde x\|^2  - \frac{\|w\|\accD}{2}\|y - x\|^2 \notag\\
		&= f_{x}(\tilde x, \xi) + \frac{\beta}{2}\left[ \|x - \tilde x\|^2 + \| y - \tilde x\|^2 - \|x - y\|^2\right]\label{eqn:perturb_three_point} \\
		&\qquad- \frac{\eta + \|w\| \nu}{2} \|y - \tilde x\|^2  - \frac{\|w\|\accD}{2}\|y - x\|^2.\notag
		\end{align}
		Lemma~\ref{lem:prox_point_bound} implies
		$
		\|\hat x - x\| \leq \frac{2L}{\bar \rho} \leq r_1.
		$
		Hence, setting $y = \hat x$ in \eqref{eqn:perturb_three_point} yields: 
		\begin{align}
		&\EE\left[\frac{\beta - \gamma}{2} \|\hat x - \tilde x\|^2 + \frac{\beta}{2}\| x - \tilde x\|^2 - \frac{\beta + \|w\|\accD }{2}\|x - \hat x\|^2\right]\notag\\
		&\leq \EE\left[f_x(\hat x, \xi) - f_x(\tilde x, \xi)\right]\notag \\
		&\leq \EE\left[f_x(\hat x, \xi) - f_x(x,\xi)\right]  + L \EE\left[\|x - \tilde x\|\right]\label{eqn:blah1}\\	
		&\leq f(\hat x) - f(x)  + L \EE\left[\|x - \tilde x\|\right] + \frac{\osa}{2}\|x - \hat x \|^2\label{eqn:blah2}\\
		&\leq \frac{\osa - \bar \rho }{2}\|x - \hat x \|^2 + L\sqrt{\EE\left[ \|x - \tilde x\|^2\right]},\label{eqn:blah3}
		%&\leq \frac{\tau - \bar \rho }{2}\|x - \hat x \|^2 + \frac{2L^2}{\beta}.
		\end{align}
		where \eqref{eqn:blah1} uses (B6), the estimate \eqref{eqn:blah2} uses (B2), and \eqref{eqn:blah3} uses the definition of $\hat x$ as the proximal point.
		Rearranging and setting $\delta :=\sqrt{ \EE\left[\|x - \tilde x\|^2\right]}$ yields the bound:
		\begin{equation}\label{eqn:intermed}
		\begin{aligned}
		\frac{\beta - \gamma}{2}\EE\left[\|\hat x - \tilde x\|^2\right] &\leq \frac{\beta + \accD\|w\|+ \osa - \bar \rho  }{2}\|x - \hat x \|^2 + L \delta - \frac{\beta }{2}\delta^2 \\
		&\leq \frac{\beta + \accD\|w\|+ \osa - \bar \rho  }{2}\|x - \hat x \|^2 + \frac{L^2}{2\beta},
		\end{aligned}
		\end{equation}
		where the last inequality follows by maximizing the expression  $L\delta - \frac{\beta }{2}\delta^2$ over $\delta\in \R$.
		To complete the proof, we will use the following estimate.
		\begin{claim}\label{claim1} The estimate holds:
			$
			\|\hat x -  x_+\|^2 \leq \|\hat x - \tilde x\|^2 + \frac{8\accR  L^3}{\beta^2}(\bar \rho^{-1} + \beta^{-1}) + \frac{4\accR ^2 L^4}{\beta^4}
			$.
		\end{claim}
		\begin{proof}
			Using Lemma~\ref{lem:prox_point_bound}, we deduce
			$\|\tilde x - x\| \leq 2L/\beta  \leq r_2$.
			In addition, we have $$\|\tilde x - x_+ \| = \|\tilde x - \cR_x(\tilde x)\| \leq \frac{\accR }{2}\|\tilde x - x\|^2\leq \frac{2\accR  L^2}{\beta^2}.$$ 
			Using the triangle inequality, we  deduce 
			$
			\|\hat x - x_+\|  \leq \|\hat x - \tilde x \| + \frac{2\accR  L^2}{\beta^2}.
			$
			Squaring both sides, we get
			$$
			\|\hat x - x_+\|^2  \leq \|\hat x - \tilde x \|^2 +  \frac{4\accR  L^2}{\beta^2}\|\hat x - \tilde x \| + \frac{4\accR ^2 L^4}{\beta^4}.
			$$
			Taking into account the estimate $
			\|\hat x - \tilde x \| \leq \|\hat x - x \| + \|x - \tilde x\| \leq 2L/\bar \rho + 2L/\beta,
			$
			completes the proof.
		\end{proof}
		Combining Claim~\ref{claim1} with the estimate \eqref{eqn:intermed}, we compute
		\begin{align*}
		\EE\left[\|\hat x -  x_+\|^2\right] &\leq \EE\left[\|\hat x - \tilde x\|^2\right] + \frac{8\accR  L^3(\bar \rho^{-1} + \beta^{-1})}{\beta^2} + \frac{4\accR ^2 L^4}{\beta^4} \\
		&\leq \frac{\beta + \accD\|w\|+ \osa - \bar \rho }{\beta- \gamma}\|x - \hat x \|^2 + L^2 a
		%\frac{L^2}{\beta(\beta - \gamma)} + \frac{8\accR  L^3(\bar \rho^{-1} + \beta^{-1})}{\beta^2} + \frac{4\accR ^2 L^4}{\beta^4}
		%	&=  \|x - \hat x \|^2  - \frac{\bar \rho - \gamma - \osa  -  \accD\|w\|}{\beta - \gamma}\|x - \hat x \|^2  + \frac{L^2}{\beta(\beta - \gamma)} + \frac{8\accR  L^3(\bar \rho^{-1} + \beta^{-1})}{\beta^2} + \frac{4\accR ^2 L^4}{\beta^4}.
		\end{align*}
		Bounding $\|w \| \leq 3L$ completes the proof of \eqref{ineq:stoc_prox_descent}.
	\end{proof}
	
	The convergence guarantees now quickly follow.

	\begin{proof}[Proof of Theorem~\ref{thm:gen_conv_guarant}]
		Fix an iteration $t$ and a  point  $\hat x_t\in\cP_{1/\bar \rho}(x_t)$. Then
		using the definition of the Moreau envelope and appealing to  Lemma~\ref{lem:stoc_prox_descent}, we deduce  
		\begin{align*}
		\EE_t[ \cM_{1/\bar \rho}(x_{t+1})] &\leq  \EE_t\left[f( \hat x_{t})  +\frac{\bar \rho}{2}\cdot \| x_{t+1} - \hat x_t \|^2\right],\\
		&\leq f( \hat x_{t}) +\frac{\bar \rho}{2}\left[\|\hat x_t-x_{t}\|^2+\left( \tfrac{\beta +\osa+ 3\accD L - \bar \rho }{\beta- \gamma}-1\right)\|\hat x_t-x_{t}\|^2  +L^2 a_t\right]\\
		&= \cM_{1/\bar \rho}(x_{t}) - \frac{\bar \rho- \gamma-\osa-3\accD L}{2\bar \rho(\beta_t-\gamma)}\cC_{1/\bar \rho}(x_t)^2  + \frac{\bar\rho L^2a_t}{2}.
		\end{align*}
		Taking expectations, iterating the inequality, and using the tower rule  yields:
		$$\sum_{t=0}^T \frac{\bar \rho - \gamma - \osa-3\accD L}{\beta_t-\gamma}\EE[\cC_{1/\bar{\rho}}(x_t)^2]\leq 2\bar{\rho}(\cM_{1/\bar \rho}(x_{0})-\min \cM_{1/\bar\rho})+  \bar \rho^2 L^2\cdot\sum_{t=0}^T a_t.$$
		Dividing through by $\sum_{t=0}^T \frac{\bar \rho - \gamma - \osa-3\accD L}{\beta_t-\gamma}$ and recognizing the left side as
		$\EE[\cC_{1/\bar{\rho}}(x_{t^*})^2]$ completes the proof.
	\end{proof}

	\section*{Conclusion}
	In this work, we presented a wide class of algorithms for minimizing weakly convex functions over proximally smooth sets and proved finite sample efficiency guarantees. The developed procedure allows one to mix approximations of both the objective function and the constraints within each iteration. We discussed consequences for  stochastic nonsmooth optimization over Riemannian manifolds (leading to Riemannian analogues of stochastic subgradient, proximal point, and prox-linear algorithms) and over sets cut out by nonlinear inequalities.

\bibliographystyle{siamplain}
\bibliography{bibliography}

%\appendix
\end{document}